
\documentclass[11pt,reqno]{amsart}
\usepackage{amssymb}
\usepackage{natbib}
\bibpunct[, ]{[}{]}{;}{n}{,}{,}


\numberwithin{equation}{section}

\allowdisplaybreaks



\newtheorem{theorem}{Theorem}[section]

\theoremstyle{definition}
\newtheorem{example}[theorem]{Example}

\newtheorem{remark}[theorem]{Remark}

\theoremstyle{remark}

\newenvironment{romenumerate}[1][0pt]{
\addtolength{\leftmargini}{#1}\begin{enumerate}
 }{\end{enumerate}}

\newcounter{oldenumi}
{\setcounter{oldenumi}{\value{enumi}}
\begin{romenumerate} \setcounter{enumi}{\value{oldenumi}}}
{\end{romenumerate}}

\newcounter{thmenumerate}

\newcounter{romxenumerate}   

\newcounter{xenumerate}   

\newcommand{\refT}[1]{Theorem~\ref{#1}}

\newcommand{\refR}[1]{Remark~\ref{#1}}
\newcommand{\refS}[1]{Section~\ref{#1}}
\newcommand{\refSS}[1]{Subsection~\ref{#1}}

\newcommand{\refE}[1]{Example~\ref{#1}}

\newcommand{\refApp}[1]{Appendix~\ref{#1}}

\newcommand{\refand}[2]{\ref{#1} and~\ref{#2}}

\newcommand\marginal[1]{\marginpar{\raggedright\parindent=0pt\tiny #1}}

\begingroup
  \count255=\time
  \divide\count255 by 60
  \count1=\count255
  \multiply\count255 by -60
  \advance\count255 by \time
  \ifnum \count255 < 10 \xdef\klockan{\the\count1.0\the\count255}
  \else\xdef\klockan{\the\count1.\the\count255}\fi
\endgroup



\newcommand\set[1]{\ensuremath{\{#1\}}}

\newcommand\xpar[1]{(#1)}
\newcommand\bigpar[1]{\bigl(#1\bigr)}
\newcommand\Bigpar[1]{\Bigl(#1\Bigr)}

\newcommand\lrpar[1]{\left(#1\right)}

\def\rompar(#1){\textup(#1\textup)}    

\newcommand\parfrac[2]{\lrpar{\frac{#1}{#2}}}

\def\xexp(#1){e^{#1}}

\newcommand\half{\tfrac12}

\newcommand\punkt[1]{\if.#1\else.\spacefactor1000\fi{#1}}

\newcommand\ie{i.e\punkt}
\newcommand\eg{e.g\punkt}
\newcommand\viz{viz\punkt}
\newcommand\cf{cf\punkt}


\newcommand\ii{\mathrm{i}}

\newcommand\bbR{\mathbb R}
\newcommand\bbQ{\mathbb Q}
\newcommand\bbC{\mathbb C}

\newcommand\bbZ{\mathbb Z}

\newcounter{CC}
\newcounter{cc}

\renewcommand\Re{\operatorname{Re}}
\renewcommand\Im{\operatorname{Im}}

\newcommand\Fix{\operatorname{Fix}}
\newcommand\Gal[2]{\operatorname{Gal}(#1\,{:}\,#2)}
\newcommand\Dis{\operatorname{Dis}}

\newcommand\arccosh{\operatorname{arccosh}}
\newcommand\arcsinh{\operatorname{arcsinh}}
\newcommand\sign{\operatorname{sign}}

\newcommand\ga{\alpha}
\newcommand\gb{\beta}
\newcommand\gd{\delta}
\newcommand\gD{\Delta}
\newcommand\gf{\varphi}
\newcommand\gam{\gamma}

\newcommand\go{\omega}
\newcommand\goo{\omega^2}

\newcommand\gs{\sigma}

\def\[#1]{[\![#1]\!]}

\newcommand\qqq{^{1/3}}

\newcommand\qqc{^{3/2}}

\renewcommand{\=}{:=}

\newcommand\rhs{right-hand side}

\newcommand\bK{\overline K}
\newcommand\hz{\widehat z}
\newcommand\hgb{\widehat \gb}
\newcommand\ArsMagna{\emph{Ars Magna} \cite{ArsMagna}}
\newcommand\ci{\emph{casus irreducibilis}}
\newcommand\pq{\Bigpar{\frac{p}3}^3+\Bigpar{\frac{q}2}^2}
\newcommand\xgam[1]{\ifcase#1\or\gam_0\or\gam_1\or\gam_2\or\gam_3\fi}
\newcommand\tR{\widetilde R}
\newcommand\sx{t}




\newcommand\REM[1]{{\raggedright\texttt{[#1]}\par\marginal{XXX}}}



\hyphenation{Upp-sala}

\newcommand\urladdrx[1]{{\urladdr{\def~{{\tiny$\sim$}}#1}}}

\begin{document}
\title
{Roots of polynomials of degrees 3 and 4}

\date{1 December 2009; revised 17 August 2010}

\author{Svante Janson}
\address{Department of Mathematics, Uppsala University, PO Box 480,
SE-751~06 Uppsala, Sweden}
\email{svante.janson@math.uu.se}
\urladdrx{http://www.math.uu.se/~svante/}

\subjclass[2000]{} 

\begin{abstract} 
We present the solutions of equations of degrees 3 and 4 using Galois
theory and some simple Fourier analysis for finite groups.
\end{abstract}

\maketitle

\section{Introduction}\label{S:intro}

The purpose of this note is to present the solutions of equations of
degrees 3 and 4 
(a.k.a.\ cubic and quartic equations)
in a way connected to Galois theory.
This is, of course, not the historical path; the solutions were found
by del Ferro, Tartaglia, Cardano (Cardan) and Ferrari in the 16th century, see
\refApp{Ahistory}, 
about 300 years before Galois
theory was created. But in retrospect, Galois theory yields the
solutions rather easily. 
More precisely, we will see below that Galois theory, together with
some simple Fourier analysis for (small) finite Abelian groups, suggests the
crucial constructions in the solutions; however, all calculations are
easily verified directly, and we do not really need any results from
Galois theory (or Fourier analysis)
for the solution. Nevertheless, we find it instructive to use Galois
theory as much as possible in order to motivate the constructions.
(See also \cite[Section 8.8]{vdW1} for a similar, but not identical,
treatment.) 
The Galois theory used here can be found in \eg\ \cite{Bew}, \cite{Cox},
\cite{Garling}, \cite{Grillet} or \cite{vdW1}. 

The appendices contain comments on the history of the problem, other solutions,
and (\refApp{Areal}) the complications that may arise when we work with real
numbers instead of complex.

\begin{remark}
  In contrast, full use of Galois theory is needed to show the
  impossibility of similar formulas for solutions of equations of
  degree 5 or more. This will not be discussed here; see instead \eg\
\cite{Bew}, \cite{Cox}, \cite{Garling},  \cite{Grillet} or \cite{vdW1}. 
\end{remark}

We let throughout  $K$ be a field with characteristic 0. (Actually,
everything in 
this note is valid also for a field $K$ of positive characteristic
$p\neq2,3$. However, the cases when the characteristic is 2 or 3 are
different since we divide by 2 and 3 in the formulas below; there are
also problems with separability in these cases.)

The roots of a polynomial in $K[x]$ are, in general, not elements of
$K$, so we will work in some unspecified extension of $K$. This
extension could be the algebraic closure $\bK$ of $K$ or some other
algebraically closed field containing $K$; in particular, if $K=\bbQ$ or
another subfield of $\bbC$ (as the reader may assume for simplicity), we can
work in $\bbC$. 

For simplicity we consider monic polynomials only.
(It is trivial to reduce to this case by dividing by the leading coefficient.)

\section{Polynomials of degree 3}\label{S3}

Let $f(x)=x^3+bx^2+cx+d$, with $b,c,d\in K$, be a polynomial of degree
3, and let $\ga_1,\ga_2,\ga_3$ be its roots in some extension of $K$.
Thus
\begin{equation}\label{f}
  f(x)=x^3+bx^2+cx+d=(x-\ga_1)(x-\ga_2)(x-\ga_3).
\end{equation}

It is convenient to make the translation $x=y-b/3$, converting $f(x)$
into
\begin{equation}\label{fg}
  g(y)\=f(y-b/3)=y^3+py+q
\end{equation}
(without second degree term) for some $p,q\in K$.
(Such polynomials, without the second highest degree term, are 
called \emph{reduced} or sometimes \emph{depressed}.)
Thus $g$ has the roots $\gb_1,\gb_2,\gb_3$ with $\gb_i=\ga_i+b/3$, so
$\ga_i=\gb_i-b/3$, $i=1,\dots,3$. Hence,
\begin{equation}\label{g}
  g(y)=y^3+py+q=(y-\gb_1)(y-\gb_2)(y-\gb_3).
\end{equation}
Consequently, identifying coefficients,
\begin{align}
  \gb_1+\gb_2+\gb_3&=0, \label{gbo}\\
  \gb_1\gb_2+\gb_1\gb_3+\gb_2\gb_3&=p, \label{gbp}\\
  \gb_1\gb_2\gb_3&=-q, \label{gbq}
\end{align}

\begin{remark}
  Explicitly,
  \begin{align}
	p&=c-\tfrac13b^2, \label{p}\\
q&=d-\tfrac13 bc+ \tfrac2{27}b^3. \label{q}
  \end{align}
\end{remark}

The polynomials $f$ and $g$ have the same discriminant
\begin{equation}\label{gD0}
  \gD\=\Dis(f)=\Dis(g)
\=\prod_{1\le i<j\le 3}(\ga_i-\ga_j)^2
=\prod_{1\le i<j\le 3}(\gb_i-\gb_j)^2.
\end{equation}
Since $\gD$ is a symmetric polynomial in $\ga_1,\ga_2,\ga_3$ (or
$\gb_1,\gb_2,\gb_3)$, it can be written as a polynomial in the
coefficients of $f$ or $g$. A well-known calculation yields, see \eg{}
\cite{SJN5}, 
\begin{equation}\label{gD}
\gD
  = b^2c^2-4 c^3-4 b^3d+18bcd-27d^2
= -4 p^3-27q^2.
\end{equation}

We also define the square root of $\gD$:
\begin{equation}\label{gd}
  \gd\=
\prod_{1\le i<j\le 3}(\ga_i-\ga_j)
=\prod_{1\le i<j\le 3}(\gb_i-\gb_j)
=\sqrt{\gD}.
\end{equation}
Note that while $\gD$ is independent of the ordering of the roots,
the sign of $\gd$ may change if we permute $\ga_1,\ga_2,\ga_3$.
More precisely, the sign is preserved by an even permutation but is
changed by an odd permutation.

Let $E=K(\ga_1,\ga_2,\ga_3)=K(\gb_1,\gb_2,\gb_3)$ be the splitting field
of $f$, or $g$, over $K$, and let $G\=\Gal EK$ be the Galois group
of the extension $E\supseteq K$. 
The elements of the Galois group $G$ permute the roots $\ga_i$ (or
$\gb_i$), and $G$ may be regarded as a subgroup of the symmetric group $S_3$.
As said above, $\gs(\gd)=\gd$ if $\gs\in G$ is an even permutation,
while $\gs(\gd)=-\gd$ if $\gs$ is odd.
Since $K \subseteq K(\gd)\subseteq E$, $E $ is a Galois extension of
$K(\gd)$, and the Galois group $\Gal{E}{K(\gd)}$ is the subgroup of
$G$ fixing $\gd$:
\begin{equation}
  \Gal E{K(\gd)}=\set{\gs\in G:\gs(\gd)=\gd}
=\set{\gs\in G:\gs\text{ is even}}
=G\cap A_3
\end{equation}
(where $A_3$ is the alternating group consisting of all even permutations in
$S_3$),
at least if $\gd\neq0$, or equivalently $\gD\neq0$, which is
equivalent to $f$ separable (\ie, $f$ has no multiple roots in
$\bK$).
In particular, if $f$ is irreducible, in which case $G$ is transitive
and thus $G=S_3$ or $A_3$, $\Gal E{K(\gd)}=A_3$, which
is the cyclic group $C_3$.
Moreover, in this case,
 $\Gal E{K(\gd)}=A_3\cong C_3$ acts on
the vectors $(\ga_1,\ga_2,\ga_3)$ and
$(\gb_1,\gb_2,\gb_3)$ by cyclic permutations; equivalently, if we regard
these vectors as functions on $\bbZ_3\cong C_3$,
 $\Gal E{K(\gd)}\cong C_3$ acts by translations in $C_3$.
This suggests using
Fourier analysis, or equivalently group representation theory, for
$A_3\cong C_3$.
(For Fourier analysis on finite Abelian groups, see \eg{}
\cite{Frazier}; in this case, the Fourier transform is often called the
\emph{discrete Fourier transform}.
The more complicated theory of group representations for general 
finite groups is treated by \cite{Serre}.)

\begin{remark}
The method below was given by \citet{Lagrange} in \citeyear{Lagrange}, 
as part of his study of equations of higher degree, 
see \cite[Sections 8.3,  12.1 and p.~14]{Cox}. The method 
is thus some decades older than both Galois theory and Fourier analysis.
In this context, the Fourier transforms $u$ and $v$ 
in \eqref{u}--\eqref{v} below
(or rather $3u$ and $3v$)
are known as the \emph{Lagrange resolvents} for the equation.
(They were also used earlier by B\'ezout and Euler \cite[p.~46]{Bew}, 
and at the same time by Vandermonde \cite{vdW}, but Lagrange made a
profound use of them.)  
Lagrange and others used this method to study equations of arbitrary degree,
see \cite[Sections 8.3 and 12.1]{Cox}
and, for example,
the solutions by Vandermonde
and Malfatti of some quintic equations givin in \cite[Chapters 7--8]{Bew}.
This is an important forerunner of Galois theory.
In retrospect, the Lagrangre resolvents can perhaps also be seen as the
beginning of discrete Fourier analysis. 
\end{remark}

We assume, for simplicity, that $K\subseteq\bbC$, and we then
define 
\begin{equation}
 \go\=\exp(2\pi\ii/3)=-\frac12+\frac{\sqrt3}2\ii,  
\end{equation}
a primitive third root of unity.
Note that
\begin{align}
  \go^3=1, &&& 1+\go+\go^2=0,
\end{align}
which will be used repeatedly below without further comment.
\begin{remark}
  For a general field $K$, not necessarily contained in $\bbC$, we can
let $\go$ be a primitive third root of unity in $\bK$. It is easily
verified that the formulas below make sense, and are correct, in
$\bK$, so the result holds in full generality.
\end{remark}

We then define the Fourier transform 
of a vector $(z_1,z_2,z_3)$
(regarded as a function on $\bbZ_3\cong C_3$) as $(\hz_1,\hz_2,\hz_3)$,
with 
\begin{equation}
 \hz_k\=\tfrac13\bigpar{z_1+\go^{-(k-1)} z_2+\go^{-2(k-1)} z_3}, 
\end{equation}
and note the Fourier inversion formula,
which in this case is easily verified directly, 
\begin{equation}\label{finv}
 z_k= \hz_1+\go^{k-1} \hz_2+\go^{2(k-1)} \hz_3. 
\end{equation}
Hence, 
if we define
\begin{align}
  u\=\tfrac13(\ga_1+\go^2\ga_2+\go\ga_3)
=\tfrac13(\gb_1+\go^2\gb_2+\go\gb_3),\label{u}\\
  v\=\tfrac13(\ga_1+\go\ga_2+\go^2\ga_3)
=\tfrac13(\gb_1+\go\gb_2+\go^2\gb_3),\label{v}
\end{align}
and note that
\begin{align}
  -\tfrac13b&=\tfrac13(\ga_1+\ga_2+\ga_3),
\\
  0&=\tfrac13(\gb_1+\gb_2+\gb_3),
\end{align}
we see that
the Fourier transforms of the vectors $(\ga_1,\ga_2,\ga_3)$ and
$(\gb_1,\gb_2,\gb_3)$ are
$(-\frac13b,u,v)$ and $(0,u,v)$, respectively.
Consequently, the inversion formula \eqref{finv} yields
\begin{align}
  \ga_1&=-\tfrac13b+u+v,\label{ga1}\\
  \ga_2&=-\tfrac13b+\go u+\go^2 v,\label{ga2}\\
  \ga_3&=-\tfrac13b+\go^2 u+\go v,\label{ga3}
\intertext{and, equivalently,}
  \gb_1&=u+v,\label{gb1}\\
  \gb_2&=\go u+\go^2 v,\label{gb2}\\
  \gb_3&=\go^2 u+\go v.\label{gb3}
\end{align}

To solve the equation $f(x)=0$, it thus suffices to find $u$ and $v$.

The objects $u$ and $v$ are elements of the field $E(\go)$, which is
the splitting field of $f$ (or $g$) over $K(\go)$. It is thus a Galois
extension of $K(\go)$, and also of the intermediate field
$K(\gd,\go)$.
An element of the Galois group $\Gal{E(\go)}{K(\go)}$ maps $E$ into
itself (because it fixes $K$ and $E$ is a normal extension of $K$),
and thus its restriction to $E$ is an element of $\Gal EK$. This
defines a group homomorphism 
$\Gal{E(\go)}{K(\go)}\to\Gal EK$, which is injective because $E(\go)$
is generated by $E$ and $K(\go)$; thus we can regard 
$\Gal{E(\go)}{K(\go)}$ as a subgroup of $\Gal EK$. 
Similarly,
$\Gal{E(\go)}{K(\gd,\go)}$ is a subgroup of $\Gal E{K(\gd)}$. 

Let $H\=\Gal{E(\go)}{K(\gd,\go)}$. Then $H\subseteq \Gal
E{K(\gd)}\subseteq A_3$, so 
if $H$ is not trivial, then $H=A_3$ and $H$ is
generated by a cyclic permutation $\gs$ with $\gs(\ga_k)=\ga_{k+1}$
(with indices modulo 3). Then, by \eqref{u}--\eqref{v}, 
$\gs(u)=\go u$ and $\gs(v)=\go^2 v$. Consequently,
$\gs(u^3)=u^3$, and $\gs(v^3)=v^3$. This implies that $u^3$ and $v^3$
are fixed by the Galois group $H$, and thus $u^3$ and $v^3$ belong to
the fixed field $\Fix_{E(\go)}(H)=K(\gd,\go)$.
We can find them as follows,  using
$x^3-1=(x-1)(x-\go)(x-\go^2)$ and thus
$x^3-y^3=(x-y)(x-\go y)(x-\go^2y)$:
First, by \eqref{gb1}--\eqref{gb3} and \eqref{gbq},
\begin{equation}\label{u3+v3}
  \begin{split}
u^3+v^3=(u+v)(u+\go v)(u+\go^2 v)
=\gb_1\gb_2\gb_3
=-q.	
  \end{split}
\end{equation}
Next, by \eqref{u}--\eqref{v},
\begin{align}
  u-v&=\tfrac13(\go^2-\go)(\ga_2-\ga_3)\\
  u-\go v&=\tfrac13(1-\go)(\ga_1-\ga_3)\\
  u-\go^2v&=\tfrac13(1-\go^2)(\ga_1-\ga_2)
\end{align}
and thus, using \eqref{gd},
\begin{equation}\label{sw}
  \begin{split}
u^3-v^3&=(u-v)(u-\go v)(u-\go^2 v)
=\frac{-\sqrt 3\,\ii}{9}(\ga_1-\ga_2)(\ga_1-\ga_3)(\ga_2-\ga_3)
\\&
=-\frac{\sqrt 3\,\ii}{9}\gd
=\sqrt{-\frac{\gD}{27}}.
  \end{split}
\raisetag{\baselineskip}
\end{equation}

\begin{remark}\label{Ruv}
The choice of square root in \eqref{sw} is not important, since a change of
sign of it is equivalent to an interchange of $u$ and $v$, which just
permutes the roots $\ga_2$ and $\ga_3$ ($\gb_2$ and $\gb_3$).)
This reflects the fact that if the Galois group $\Gal{E(\go)}{K(\go)}$
contains an odd 
permutation $\tau$, then $\tau(u)=\go^j v$ and  $\tau(v)=\go^j u$ for
some $j=0,1,2$; thus $\tau(u^3)=v^3$ and $\tau(v^3)=u^3$.
  \end{remark}

We thus find, recalling \eqref{gD},
\begin{align} \label{u3}
  u^3&=\frac{-q+\sqrt{-\gD/27}}2
=-\frac q2+\sqrt{\frac{-\gD}{108}}
=-\frac q2+\sqrt{\pq},
\\  \label{v3}
  v^3&=\frac{-q-\sqrt{-\gD/27}}2
=-\frac q2-\sqrt{\frac{-\gD}{108}}
=-\frac q2-\sqrt{\pq}.
\end{align}
We then find $u$ and $v$ by taking cube roots. In order to choose the
right roots, we also compute, from \eqref{u}--\eqref{v} and
\eqref{gbo}--\eqref{gbp},
\begin{equation}
  \label{uv}
  \begin{split}
uv&=
\tfrac19\bigpar{\gb_1^2+\gb_2^2+\gb_3^2+(\go+\go^2)(\gb_1\gb_2+\gb_1\gb_3+\gb_2\gb_3)}
\\&
=
\tfrac19\bigpar{(\gb_1+\gb_2+\gb_3)^2-3(\gb_1\gb_2+\gb_1\gb_3+\gb_2\gb_3)}
\\&
=-\tfrac13 p.	
  \end{split}
\end{equation}
If we replace $u$ by the alternative cube root $\go u$ or $\goo u$, we
thus have to replace $v$ by $\goo v$ or $\go v$, respectively, which by 
\eqref{ga1}--\eqref{ga3} and \eqref{gb1}--\eqref{gb3} yields a cyclic
permutation of the roots $\ga_1,\ga_2,\ga_3$ or $\gb_1,\gb_2,\gb_3$.
We summarize:

\begin{theorem}[Cardano's formula] \label{T3}
The roots of $g(y)=y^3+py+q$ are given by
\begin{equation}\label{t3a}
\sqrt[3]{-\frac q2+\sqrt{\pq}}
+
\sqrt[3]{-\frac q2-\sqrt{\pq}},
\end{equation}
  where the two square roots are chosen to be the same, and the two
  cube roots are chosen such that their product is $-p/3$; this gives
  $3$ choices for the cube roots, which gives the $3$ roots of $g(y)=0$.
(In the exceptional case $p=q=0$, the only root $0$ is counted thrice.)

Equivalently, the roots of $f(x)=x^3+bx^2+cx+d$ are given by 
\begin{equation}\label{t3b}
  \begin{split}
-\frac b3+  \sqrt[3]{-\frac q2+\sqrt{\pq}}
&+
\sqrt[3]{-\frac q2-\sqrt{\pq}}
\\
= -\frac b3 +
  \sqrt[3]{-\frac q2+\sqrt{\frac{-\gD}{108}}}
&+
\sqrt[3]{-\frac q2-\sqrt{\frac{-\gD}{108}}},
  \end{split}
\end{equation}
with $p$ and $q$ given by \eqref{p}--\eqref{q}, and $\gD$ given by \eqref{gD}.
\end{theorem}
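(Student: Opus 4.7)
The plan is to assemble the pieces already derived: formulas \eqref{u3} and \eqref{v3} identify $u^3$ and $v^3$ as exactly the two radicands appearing under the cube roots in \eqref{t3a}, and \eqref{gb1} asserts $\gb_1=u+v$. So at heart the theorem is a repackaging. I would first note that the two square roots in \eqref{u3} and \eqref{v3} are forced to be the same sign, because $u^3$ and $v^3$ are not arbitrary cube roots of the expressions in \eqref{t3a} but the specific elements of $E(\go)$ determined by \eqref{u3+v3} and \eqref{sw}, in which a single square root of $-\gD/27$ appears with opposite signs for $u^3$ and $v^3$. Then $\gb_1=u+v$ by \eqref{gb1} is one root of $g$.

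The only nontrivial point is the cube-root coordination and the claim that the three coordinated choices produce all three roots. I would argue as follows. Fix any particular cube roots $u_0, v_0$ of the two numbers in \eqref{u3}--\eqref{v3}. The nine possible pairs are $(\go^j u_0,\go^k v_0)$ for $j,k\in\{0,1,2\}$. Their products range over the three cube roots of $u_0^3 v_0^3=-p^3/27$, namely $\go^{j+k}(u_0 v_0)$; since $u_0 v_0=\go^m(-p/3)$ for a unique $m$, the constraint $uv=-p/3$ from \eqref{uv} selects exactly the three pairs with $j+k\equiv -m\pmod 3$. Substituting each such pair into \eqref{gb1}--\eqref{gb3} and using $\go^3=1$ shows that the three admissible $u+v$ values coincide (up to relabeling) with the triple $\gb_1,\gb_2,\gb_3$, since passing from one admissible pair to the next corresponds to $u\mapsto\go u$, $v\mapsto\go^2 v$, which cyclically permutes $\gb_1,\gb_2,\gb_3$ by \eqref{gb1}--\eqref{gb3}. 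Because \eqref{uv} is a genuine identity for the true $u,v$ from \eqref{u}--\eqref{v}, the true pair is one of the three admissible ones, and so all three produce roots of $g$.

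The exceptional case $p=q=0$ must be noted separately: then both \eqref{u3} and \eqref{v3} give $u^3=v^3=0$, the product constraint \eqref{uv} is trivially satisfied, and all three cube roots equal $0$, yielding the triple root of $y^3=0$. The second formula \eqref{t3b} for the roots of $f(x)=x^3+bx^2+cx+d$ follows at once by the shift $\ga_i=\gb_i-b/3$ from \eqref{fg}, using \eqref{p}--\eqref{q} to express $p$ and $q$ in terms of $b,c,d$ and \eqref{gD} to rewrite $(p/3)^3+(q/2)^2=-\gD/108$.

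The main potential obstacle is purely bookkeeping: matching the three coordinated choices of cube roots bijectively to $\gb_1,\gb_2,\gb_3$ via \eqref{gb1}--\eqref{gb3}. Once the $\go$-equivariance of the pair $(u,v)$ under the substitution $u\mapsto\go u$ is observed, this step reduces to an inspection of \eqref{gb1}--\eqref{gb3}, and the theorem follows with no further computation.
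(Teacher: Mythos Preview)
Your proposal is correct and follows essentially the same approach as the paper: the paper's proof is precisely the development in Section~\ref{S3} leading up to the theorem, which derives \eqref{u3}--\eqref{v3} and \eqref{uv}, invokes \eqref{gb1}--\eqref{gb3}, and then observes (in the paragraph just before the theorem) that replacing $u$ by $\go u$ forces $v\mapsto\goo v$ and cyclically permutes the roots. Your version is slightly more explicit about why exactly three of the nine cube-root pairs satisfy $uv=-p/3$, but this is the same argument.
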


\begin{remark}
  This formula is known as Cardano's formula since it was first published by
  Cardano in \ArsMagna, although it is attributed by him to Scipione del
  Ferro, see   \refApp{Ahistory}. 
\end{remark}

\begin{remark}\label{R30}
  The case $p=q=0$ is exceptional because then (and only then)
  $u^3=v^3=0$. This is the trivial case when $f$ and $g$ are cubes
$(x+b/3)^3$ and $y^3$  and thus have triple roots $-b/3$ and 0, respectively.

The case with a double (but not triple) root are handled correctly by
\refT{T3}.
This is the case when $\gD=0$ (but not $p=q=0$), and thus $u^3=v^3$
($\neq0$).
We can find a cube root $u=v$ with $uv=u^2=-p/3$, and then the other
eligible pairs of cube roots are $(\go u,\goo u)$ and $(\goo u,\go
u)$,
yielding the roots $\gb_1=2u$, $\gb_2=\gb_3=-u$, and thus
$\ga_1=-b/3+2u$, $\ga_2=\ga_3=-b/3-u$.

Similarly, there are no problems in the case when $u^3$ or $v^3$ is 0,
but not both. This happens, by \eqref{u3}--\eqref{v3} and \eqref{uv},
when $p=0$ but $q\neq0$. Choosing the square root such that $v^3=0$,
we have $u^3=-q$; the polynomial $g(y)$ equals $y^3+q$ which has the three
roots $u$, $\go u$, $\goo u$.
\end{remark}

\begin{remark}
By \eqref{u3+v3} and \eqref{uv}, which implies $u^3v^3=-p^3/27$, $u^3$
and $v^3$ are the roots of the \emph{quadratic resolvent}
\begin{equation}\label{quadres}
  r(x)\=x^2+qx-p^3/27\in K[x].
\end{equation}
Note that the quadratic
resolvent has discriminant, by \eqref{sw},
\begin{equation}
  \Dis(r)\=(u^3-v^3)^2=-\frac{\gD}{27}=-\frac1{27}\Dis(f).
\end{equation}
\end{remark}

\section{Polynomials of degree 4}\label{S4}

Let $f(x)=x^4+bx^3+cx^2+dx+e$, with $b,c,d,e\in K$, be a polynomial of degree
4, and let $\ga_1,\ga_2,\ga_3,\ga_4$ be its roots in some extension of $K$.
Thus
\begin{equation}\label{f4}
  f(x)=x^4+bx^3+cx^2+dx+e=(x-\ga_1)(x-\ga_2)(x-\ga_3)(x-\ga_4).
\end{equation}

It is convenient to make the translation $x=y-b/4$, converting $f(x)$
into
\begin{equation}\label{fg4}
  g(y)\=f(y-b/4)=y^4+py^2+qy+r
\end{equation}
(without third degree term) for some $p,q,r\in K$.
Thus $g$ has the roots $\gb_1,\gb_2,\gb_3,\gb_4$ with $\gb_i=\ga_i+b/4$, so
$\ga_i=\gb_i-b/4$, $i=1,\dots,4$. Hence,
\begin{equation}\label{g4}
  g(y)=y^4+py^2+qy+r=(y-\gb_1)(y-\gb_2)(y-\gb_3)(y-\gb_4).
\end{equation}


The polynomials $f$ and $g$ have the same discriminant
\begin{equation}
  \gD\=\Dis(f)=\Dis(g)=\prod_{1\le i<j\le 4}(\ga_i-\ga_j)^2
=\prod_{1\le i<j\le 4}(\gb_i-\gb_j)^2.
\end{equation}
Since $\gD$ is a symmetric polynomial in $\ga_1,\dots,\ga_4$ (or
$\gb_1,\dots,\gb_4)$, it can be written as a polynomial in the
coefficients of $f$ or $g$. A well-known calculation yields, see \eg{}
\cite{SJN5}, 
\begin{align}
  \gD
&
= 
{b}^{2}{c}^{2}{d}^{2}-4\,{b}^{2}{c}^{3}e-4\,{b}^{3}{d}^{3}+18\,{b}^{3}
cde-27\,{b}^{4}{e}^{2}-4\,{c}^{3}{d}^{2}
\notag \\&\qquad
+16\,{c}^{4}e+18\,bc{d}^{3}-80
\,b{c}^{2}de-6\,{b}^{2}{d}^{2}e+144\,{b}^{2}c{e}^{2}
\notag \\&\qquad
-27\,{d}^{4}
+144\,c{d}^{2}e-128\,{c}^{2}{e}^{2}-192\,bd{e}^{2}+256\,{e}^{3}
\label{gD4a} \\&
= 
-4\,{p}^{3}{q}^{2}
-27\,{q}^{4}
+16\,{p}^{4}r+144\,p{q}^{2}r-128\,{p}^{2
}{r}^{2}+256\,{r}^{3}
. \label{gD4b}
\end{align}

We also define the square root of $\gD$:
\begin{equation}\label{gd4}
  \gd\=
\prod_{1\le i<j\le 4}(\ga_i-\ga_j)
=\prod_{1\le i<j\le 4}(\gb_i-\gb_j)
=\sqrt{\gD}.
\end{equation}
Again,
the sign of $\gd$ may change if we permute $\ga_1,\dots,\ga_4$;
the sign is preserved by an even permutation but is
changed by an odd permutation.

Let $E=K(\ga_1,\dots,\ga_4)=K(\gb_1,\dots,\gb_4)$ be the splitting field
of $f$, or $g$, over $K$, and let $G\=\Gal EK$ be the Galois group
of the extension $E\supseteq K$. 
The elements of the Galois group $G$ permute the roots $\ga_i$ (or
$\gb_i$), and $G$ may be regarded as a subgroup of $S_4$.

$S_4$ has a normal subgroup $V$ consisting of the 4 permutations
$\iota$ (identity) and $(12)(34)$, $(13)(24)$, $(14)(23)$. Thus $G$
has a normal subgroup $G\cap V$. Let the fixed field of $G\cap V$ be
$F$. Then $F$ is a Galois extension of $K$ with Galois group $G/G\cap
V\subseteq S_4/V\cong S_3$.

Fourier analysis on $V$ is especially simple because every element has
order 1 or 2, and thus every character is $\pm1$ (again, see \eg{}
\cite{Frazier} or \cite{Serre}). We identify functions on $V$ by
vectors $(z_1,z_2,z_3,z_4)$, with $z_1$ the value at $\iota$, and
define the Fourier transform of 
$(z_1,z_2,z_3,z_4)$ as $(\hz_1,\hz_2,\hz_3,\hz_4)$ with 
\begin{align}
  \hz_1&\=\tfrac12(z_1+z_2+z_3+z_4),\\
  \hz_2&\=\tfrac12(z_1+z_2-z_3-z_4),\\
  \hz_3&\=\tfrac12(z_1-z_2+z_3-z_4),\\
  \hz_4&\=\tfrac12(z_1-z_2-z_3+z_4).
\end{align}
For $V$, with our chosen normalization, the Fourier inversion formula
takes the especially simple form $\widehat{\widehat z}=z$, \ie, the
Fourier transform is its own inverse. (This is easily verified directly.)

Since $\gb_1+\gb_2+\gb_3+\gb_4=0$, the Fourier coefficient $\hgb_1=0$.
For convenience, we shift the indices and define $\gam_i\=\hgb_{i+1}$,
$i=0,1,2,3$, where thus $\gam_0=0$.  
The Fourier transforms of $(\ga_1,\ga_2,\ga_3,\ga_4)$ and
$(\gb_1,\gb_2,\gb_3,\gb_4)$ are thus $(-\frac12 b,\xgam2,\xgam3,\xgam4)$ 
and $(0,\xgam2,\xgam3,\xgam4)$, where
\begin{align}
  \xgam2&\=\hgb_2\=\tfrac12(\ga_1+\ga_2-\ga_3-\ga_4)=\gb_1+\gb_2=-\gb_3-\gb_4,
\label{gam1}\\
  \xgam3&\=\hgb_3\=\tfrac12(\ga_1-\ga_2+\ga_3-\ga_4)=\gb_1+\gb_3=-\gb_2-\gb_4,
\label{gam2}\\
  \xgam4&\=\hgb_4\=\tfrac12(\ga_1-\ga_2-\ga_3+\ga_4)=\gb_1+\gb_4=-\gb_2-\gb_3.
\label{gam3}
\end{align}

Permutations in $V$ act on the vectors (regarded as functions on $V$)
by translations in $V$, and thus on the Fourier transforms by
multiplying by characters, which are $\pm1$. In other words,
permutations in $V$ act on $\xgam2$, $\xgam3$ and $\xgam4$ by
multiplying by $\pm1$ (as is easily seen directly from
\eqref{gam1}--\eqref{gam3}). 
Consequently, if we define
\begin{align}
u&\=  \xgam2^2=(\gb_1+\gb_2)^2=(\gb_3+\gb_4)^2, \label{u4}\\
v&\=  \xgam3^2=(\gb_1+\gb_3)^2=(\gb_2+\gb_4)^2, \label{v4}\\
w&\=  \xgam4^2=(\gb_1+\gb_4)^2=(\gb_2+\gb_3)^2, \label{w4}
\end{align}
then $u$, $v$ and $w$ are fixed by $V\cap G$, and the thus belong to
the fixed field $F$.
We can easily find them explicitly. 
If $\gs$ is any element of the Galois group $G$, then $\gs$ permutes
$\gb_1,\dots,\gb_4$, and it follows from \eqref{u4}--\eqref{w4} that
$\gs$ permutes $u,v,w$. Hence any symmetric polynomial in $u,v,w$ is
fixed by every $\gs\in G$, and thus it belongs to $K$.
In particular, this applies to the coefficients of 
the 
polynomial
$  R(x)\=(x-u)(x-v)(x-w)$, which thus has coefficients in $K$.
Calculations yield the explicit formulas
\begin{align}
  u+v+w&=-2p, \label{u+v+w}\\
uv+uw+vw&=p^2-4r,\\
uvw&=q^2.  \label{uvw}
\end{align}
Hence, $u$, $v$, $w$ are the three roots of 
the \emph{cubic resolvent} 
\begin{equation}\label{R}
  R(x)\=(x-u)(x-v)(x-w)=x^3+2px^2+(p^2-4r)x-q^2 \in K[x].
\end{equation}

\begin{remark}
  Note that 
$u-v=(\gb_1-\gb_4)(\gb_2-\gb_3)$,
$u-w=(\gb_1-\gb_3)(\gb_2-\gb_4)$,
$v-w=(\gb_1-\gb_2)(\gb_3-\gb_4)$.
Hence the discriminant $(u-v)^2(u-w)^2(v-w)^2$ of the cubic resolvent
$R$ equals the
discriminant of $g$ or $f$ given by \eqref{gD4a}--\eqref{gD4b}.
\end{remark}

Having found $u,v,w$, we take their square roots to find
$\xgam2,\xgam3,\xgam4$. 
By \eqref{uvw}, $\xgam2\xgam3\xgam4=\pm q$.
In fact, using \eqref{gam1}--\eqref{gam3} and $\gb_1+\gb_2+\gb_3+\gb_4=0$,
\begin{equation*}
  \begin{split}
&  \xgam2\xgam3\xgam4+q
=   \xgam2\xgam3\xgam4-
 (\gb_1\gb_2\gb_3+\gb_1\gb_2\gb_4+\gb_1\gb_3\gb_4+\gb_2\gb_3\gb_4)
\\
&\qquad=-(\gb_3+\gb_4)(\gb_2+\gb_4)(\gb_2+\gb_3)
+(\gb_2+\gb_3+\gb_4)(\gb_2\gb_3+\gb_2\gb_4+\gb_3\gb_4)
\\ &\qquad\qquad {}-\gb_2\gb_3\gb_4
\\ &\qquad 
=0.	
  \end{split}
\end{equation*}
Hence,
\begin{equation}\label{gam234}
  \xgam2\xgam3\xgam4=-q,  
\end{equation}
which provides the information we need on the signs of 
$\xgam2,\xgam3,\xgam4$. 
We then find $\gb_1,\dots,\gb_4$ by taking the (inverse) Fourier
transform of $(0,\xgam2,\xgam3,\xgam4)$. 
We summarize the resulting algorithm:
\begin{theorem}\label{T4}
  Let $g(y)=y^4+py^2+qy+r$, and define the cubic resolvent $R(x)$ by
  \eqref{R}.
Let the roots of $R$ by $u,v,w$ (for example found by \refT{T3}),
and let $\xgam2\=\sqrt u$, $\xgam3\=\sqrt v$, $\xgam4\=\sqrt w$,
where we choose the signs so that $\xgam2\xgam3\xgam4=-q$.
Then the roots of $g$ are given by
\begin{align}
  \gb_1&=\tfrac12(\xgam2+\xgam3+\xgam4),\label{x1}\\
  \gb_2&=\tfrac12(\xgam2-\xgam3-\xgam4),\\
  \gb_3&=\tfrac12(-\xgam2+\xgam3-\xgam4),\\
  \gb_4&=\tfrac12(-\xgam2-\xgam3+\xgam4).\label{x4}
\end{align}
The roots of $f(x)=x+bx^3+cx^2+dx+e$ are $\ga_i=\gb_i-b/4$,
$i=1,\dots,4$, where $g(y)\=f(y-b/4)$.
\end{theorem}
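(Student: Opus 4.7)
The plan is to show that the algorithm is merely a repackaging of the derivation carried out in the preceding paragraphs, so the proof is essentially a verification that all the pieces fit together. First I would check that the three values $u,v,w$ defined in \eqref{u4}--\eqref{w4} are indeed roots of the polynomial $R(x)$ in \eqref{R}: this is exactly the content of the identities \eqref{u+v+w}--\eqref{uvw}, which follow from expanding the symmetric functions of $u,v,w$ in terms of $\gb_1,\dots,\gb_4$ and identifying them with the coefficients $p,q,r$ of $g$ via the elementary symmetric functions (using $\gb_1+\gb_2+\gb_3+\gb_4=0$). Since any $\gs\in G$ permutes $\gb_1,\dots,\gb_4$ and hence permutes $\{u,v,w\}$, the polynomial $R$ has coefficients in $K$, so this step is natural.

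Next, given $u,v,w$, I would pass to square roots. A priori there are eight sign choices for the triple $(\sqrt u,\sqrt v,\sqrt w)$, but \eqref{gam234} forces $\xgam2\xgam3\xgam4=-q$, which cuts the number down to four. Any one admissible triple thus equals $(\pm \xgam2, \pm\xgam3, \pm\xgam4)$ for a pattern of signs corresponding to one of the four elements of $V$, and the four admissible triples are exactly the orbit of $(\xgam2,\xgam3,\xgam4)$ under $V$ acting through the characters of $V$. Hence any admissible choice recovers the same multiset $\{\gb_1,\gb_2,\gb_3,\gb_4\}$, merely possibly permuted by an element of $V$.

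Once we have an admissible triple $(\xgam2,\xgam3,\xgam4)$, the Fourier inversion formula for $V$ (which, with our normalization, is self-inverse) applied to $(0,\xgam2,\xgam3,\xgam4)$ gives back $(\gb_1,\gb_2,\gb_3,\gb_4)$; this is precisely the system \eqref{x1}--\eqref{x4}. The translation $\ga_i=\gb_i-b/4$ then yields the roots of $f$ by the reduction \eqref{fg4}.

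The main technical point, and the only step that requires genuine care, is the sign selection via \eqref{gam234}: I would single out the degenerate case $q=0$, where the constraint $\xgam2\xgam3\xgam4=0$ is satisfied as soon as one $\gam_i$ vanishes (i.e.\ whenever the cubic resolvent has $0$ as a root, which it does when $q=0$), so the signs of the remaining two square roots are free but consistent under the $V$-action described above. A further sanity check, worth recording, is that the resolvent's discriminant equals $\Dis(f)$ (as noted in the remark following \eqref{R}), which ensures that separability of $f$ is equivalent to separability of $R$ and confirms that the algorithm behaves correctly on multiple roots.
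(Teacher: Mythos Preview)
Your proposal is correct and follows essentially the same route as the paper: the paper does not give a separate proof block for \refT{T4} but states it as a summary of the derivation immediately preceding it (the definitions \eqref{gam1}--\eqref{gam3} and \eqref{u4}--\eqref{w4}, the symmetric-function identities \eqref{u+v+w}--\eqref{uvw} giving $R$, the sign computation \eqref{gam234}, and the Fourier inversion), which is exactly what you outline. Your remarks on the $q=0$ degeneracy and on $\Dis(R)=\Dis(f)$ are reasonable elaborations but go slightly beyond what the paper records; the paper only notes, after the theorem, that flipping two signs of the $\gam_i$ permutes the $\gb_j$.
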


Note that changing the signs of some of $\xgam2,\xgam3,\xgam4$ while still
preserving \eqref{gam234} (\ie, changing the sign of exactly two of them),
just yields a permutation of $\beta_1,\dots,\gb_4$.

\begin{remark}
  \label{REuler}
The formulas \eqref{x1}--\eqref{x4} were given by Euler \cite[\S 5]{Euler}
in 1733. Euler's motivation was different. 
For the cubic,
Cardano's formula is $\sqrt[3]{U}+\sqrt[3] V$ where $U\=u^3$ and
$V\=v^3$ are roots of the quadratic resolvent \eqref{quadres}.
Further, for a quadratic $x^2=a$ there is 
the trivial formula $\sqrt{a}$.
Hence, Euler sought by analogy a formula for the roots of a quartic in the form
$\sqrt{A}+\sqrt{B}+\sqrt{C}$, and found a cubic equation for $A,B,C$ by 
substituting in $g(y)=0$, see \cite{Euler} for details.
(In our notation, $A=u/4$, $B=v/4$ and $C=v/4$, so Euler's cubic equation
is our $R(4x)=0$; the difference from our cubic resolvent equation $R(u)=0$
is thus only a 
trivial matter of normalization.) 

Euler \cite[\S\S 6--8]{Euler} proceeded to write the solution as 
$\sqrt[4]E+\sqrt[4]F+\sqrt[4]G$ (with $E=A^2$, $F=B^2$, $G=C^2$), and found
another cubic equation satisfied by $E,F,G$.
Euler conjectured that similar formulas existed for higher degrees too, and
in particular that the roots of a fifth degree equation could be found as
$\sqrt[5]A+\sqrt[5]B+\sqrt[5]C+\sqrt[5]D$, where $A,B,C,D$ were the roots of
some fourth degree resolvent; however, he could not find such a resolvent.
Of course, 
we know that Euler's conjecture cannot hold, since
100 years later it was proved by Abel and Galois that in
general there is no solution by radicals for a fifth degree equation.
\end{remark}

\begin{remark}\label{Rbb}
  A simple calculation yields, by \eqref{x1}--\eqref{x4} and \eqref{u+v+w},
  \begin{equation}
	\begin{split}
\gb_1\gb_2+\gb_3\gb_4
&  =\frac14\bigpar{\xgam2^2-(\xgam3+\xgam4)^2  +\xgam2^2-(\xgam3-\xgam4)^2}
\\&
  =\frac14\bigpar{2\xgam2^2-2\xgam3^2  -2\xgam4^2}	  
  =\frac12\bigpar{u-v-w}
=u+p
	\end{split}
  \end{equation}
and similarly
\begin{align}
  \gb_1\gb_3+\gb_2\gb_4&=v+p,
\\
  \gb_1\gb_4+\gb_2\gb_3&=w+p.
\end{align}
Hence, the roots of the cubic resolvent are the three values of
$\gb_i\gb_j+\gb_k\gb_l-p$ for different permutations $ijkl$ of $1234$.

For the roots $\ga_i$ of $f$ we have,
 since $\ga_i=\gb_i-b/4$ and $\gb_1+\gb_2+\gb_3+\gb_4=0$,
\begin{equation}
  \ga_1\ga_2+\ga_3\ga_4=\gb_1\gb_2+\gb_3\gb_4+\frac{b^2}{8}=u+p+\frac{b^2}8,
\end{equation}
and similarly 
$\ga_1\ga_3+\ga_2\ga_4=v+p+b^2/8$,
 $ \ga_1\ga_4+\ga_2\ga_3=w+p+b^2/8$.
\end{remark}

\begin{remark}
  \label{Rbb1}
Another method to solve the quartic equation $x^4+bx^3+cx^2+dx+e=0$, 
also due to \citet{Lagrange}, is
to form (\cf{} \refR{Rbb})
\begin{align}
  s_1&\=\ga_1\ga_2+\ga_3\ga_4,&
  s_2&\=\ga_1\ga_3+\ga_2\ga_4,&
  s_3&\=\ga_1\ga_4+\ga_2\ga_3,
\end{align}
and the cubic polynomial 
$\tR(z)=(z-s_1)(z-s_2)(z-s_3)$ with these as roots.
This polynomial can be expressed in the coefficients of the equation as,
see \cite[Section 12.1]{Cox},
\begin{equation}\label{tR}
  \tR(z)=z^3-cz^2+(bd-4e)z-d^2-b^2e+4ce.
\end{equation}
By \refR{Rbb} and \eqref{R}, $\tR(z)=R(z-p-b^2/8)$, so this is our usual
cubic resolvent in disguise.

Having found $s_1,s_2,s_3$ by solving the resolvent equation $\tR(s)=0$, one
notes, for $k=1,2,3$, 
\begin{equation}
  (2\gam_k)^2-4s_k=b^2-4c,
\end{equation}
and thus
\begin{equation}
  \gam_k=\pm\sqrt{s_k-c+b^2/4},
\end{equation}
which, recalling \eqref{gam234}, yields
the roots by \eqref{x1}--\eqref{x4}.
\end{remark}

\appendix

\section{Real cubic equations}\label{Areal}

Assume that $f(x)=x^3+ax^2+bx+c$ is a polynomial of degree 3 with real
coefficients.

Then $f$ has always 3 complex roots (not necessarily distinct, and as always
given by \refT{T3}),
but the number of real roots may be smaller. 
The following theorem shows
that the number of real roots is 1 or 3, and that the discriminant
discriminates between the possible cases.

\begin{theorem}\label{TR3}
  Let $\gD$ be the discriminant of $f$ given by \eqref{gD}; thus $\gD$ is real.
  \begin{romenumerate}
	\item
If $\gD>0$, then $f$ has three simple real roots.
\item
If $\gD<0$, then $f$ has one simple real root, and two non-real complex
roots forming a pair of complex conjugates.
\item
If $\gD=0$, then $f$ has either one double and one simple real root, or a
triple real root.
  \end{romenumerate}
\end{theorem}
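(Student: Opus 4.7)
The plan is to argue by case analysis on the root structure of $f$. Since $f$ has real coefficients, non-real roots come in complex conjugate pairs; since $\deg f=3$ is odd, $f$ has at least one real root. Hence the multiset $\{\ga_1,\ga_2,\ga_3\}$ falls into exactly one of three mutually exclusive and exhaustive configurations: (A) three distinct real roots; (B) one real root together with a pair of distinct complex conjugate non-real roots; or (C) a repeated root. I will show that (A), (B), (C) correspond precisely to $\gD>0$, $\gD<0$, and $\gD=0$; since the three cases are exhaustive, the forward implications become equivalences and all three parts of the theorem follow at once. (That $\gD\in\bbR$ is immediate from \eqref{gD}, which expresses $\gD$ as a polynomial in the real coefficients of $f$.)

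For (C), I use that by \eqref{gD0}, $\gD=0$ is equivalent to $\ga_i=\ga_j$ for some $i\neq j$. A repeated root cannot be non-real: if a non-real conjugate pair $\ga,\bar\ga$ coincided, then $\ga=\bar\ga$ would force $\ga\in\bbR$, a contradiction; and a real root cannot equal a non-real one. Therefore every repeated root is real, and the conjugate-pair constraint on the remaining root then forces all three roots to be real. This leaves precisely the two sub-cases ``one double and one simple real root'' and ``a triple real root'' asserted in (iii).

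For (A) and (B) I assume $\gD\neq 0$ and read off the sign from \eqref{gD0}. In (A), each $\ga_i-\ga_j$ is a nonzero real number, so each squared factor is strictly positive and $\gD>0$. In (B), let $\ga_1\in\bbR$ and $\ga_2=\bar\ga_3\notin\bbR$; then
\[
(\ga_1-\ga_2)^2(\ga_1-\ga_3)^2=\bigpar{(\ga_1-\ga_2)(\ga_1-\bar\ga_2)}^2=\abs{\ga_1-\ga_2}^4>0,
\]
while $(\ga_2-\ga_3)^2=(2\ii\,\Im\ga_2)^2=-4(\Im\ga_2)^2<0$, so their product is negative and $\gD<0$.

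There is no substantive obstacle here; the argument reduces to a trichotomy of root configurations and a short sign computation. The only mildly delicate point is verifying that a repeated root of a real cubic cannot be non-real, which is handled above by the conjugate-pair observation.
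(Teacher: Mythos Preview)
Your proof is correct and follows essentially the same approach as the paper: you classify the possible root configurations using conjugate pairing, then read off the sign of $\gD$ from the product formula \eqref{gD0}, with the same computation $|\ga_1-\ga_2|^4(2\ii\,\Im\ga_2)^2<0$ in the one-real-root case. The only cosmetic differences are that the paper invokes the intermediate value theorem explicitly for the existence of a real root and uses the sum $\ga_3=-b-\ga_1-\ga_2$ to see that the third root is real in case (iii), whereas you deduce both from the conjugate-pair constraint directly.
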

\begin{proof}
  Let, as in \refS{S3}, the 3 roots of $f$ in $\bbC$ be $\ga_1$, $\ga_2$,
  $\ga_3$. Note that $\ga_1+\ga_2+\ga_3=-b$. 
$\gD$ is real by \eqref{gD}. By \eqref{gD0}, $\gD=0$ if and only if two of
  the three roots coincide, \ie, $f$ has a double or triple root. This root
  has to be real, since otherwise its conjugate would be another double or
  triple root and $f$ would have at least 4 roots (counted with
  multiplicity), which is impossible.
If there is a real double root $\ga_1=\ga_2$, then $\ga_3=-b-\ga_1-\ga_2$ is
  real too.
This shows (iii).

Now suppose that $\gD\neq0$; thus $f$ has three distinct simple roots
in $\bbC$.
Since $f(x)\to-\infty$ as $x\to-\infty$ and $f(x)\to\infty$ as $x\to\infty$,
$f$ has at least one real root by the intermediate value theorem.
Further, since the roots are simple, $f$
changes sign at each root, so $f$ has an odd number of real roots. Hence $f$
has either 1 or 3 real roots.

If $f$ has 3 distinct real roots  $\ga_1$, $\ga_2$, $\ga_3$, then \eqref{gD0}
yields $\gD>0$.

If $f$ has only one real root, then the roots are $\ga_1$, $\ga_2$,
$\overline{\ga_2}$ with $\ga_1\in\bbR$ and $\ga_2\notin\bbR$, and
\eqref{gD0} yields
\begin{equation*}
  \gD=(\ga_1-\ga_2)^2(\ga_1-\overline{\ga_2})^2(\ga_2-\overline{\ga_2})^2
=|\ga_1-\ga_2|^4(2\ii\Im\ga_2)^2
<0.
\qedhere
\end{equation*}
\end{proof}

\begin{remark}
  \label{RRg}
For a cubic $y^3+py+q$ without second degree term,
$\gD/108=-(p/3)^3-(q/2)^2$ by \eqref{gD}, and thus the criterion for case
(i) is $(p/3)^3+(q/2)^2<0$; equivalently, $p<0$ and $|p/3|^3>|q/2|^2$.
This was found already by Cardano, see \refR{Rci}.
\end{remark}

The number of real roots is thus easily found. Now let us consider finding
the root(s). There are by \refT{TR3} three cases, which we treat
separately since they turn out to be quite different.
(Of course, the roots are always given by \refT{T3}, but we now want to
perform only real arithmetic, if possible.)

\subsection{$\gD=0$, a double or triple root}\label{SSAD0}

In the case $\gD=0$, the roots are easily found. The double (or triple) root
$\ga_1=\ga_2$ is also a root of the quadratic equation $f'(x)=0$
(choosing the root that also satisfies $f(x)=0$), and then
$\ga_3$ is given by $\ga_3=-b-2\ga_1$. 
It is easily seen that
$\ga_1=\ga_2=-b/3+\gd$, and thus  $\ga_3=-b/3-2\gd$, where
$\gd=\pm\sqrt{-p/3}$ with the correct sign given by $\sign(\gd)=\sign(q)$.
(See also Remarks \refand{R30}{RAci}.)

\subsection{$\gD<0$, one simple real root}

If $\gD<0$, then Cardano's formula \eqref{t3b} yields the unique real root
of $f$ by choosing the real cube roots. 
Note that $-q/2\pm\sqrt{-\gD/108}$
is real and that the product of the real cube roots in \eqref{t3b} is $-p/3$
as required, because the product is a cube root of $(-p/3)^3$ and $-p/3$ is
real.

\subsection{$\gD>0$, three simple real roots:
\ci}

The case $\gD>0$ is much more complicated.
Of course, Cardano's formula \eqref{t3b} still applies, but now
$\sqrt{-\gD/108}$ is imaginary 
and $-q/2\pm\sqrt{-\gD/108}$ complex,
so the
formula necessarily involves taking cube roots 
of complex (nonreal) numbers, even though we know that the final answer is a
real root of $f$; the three different choices of cube roots 
of $-q/2+\sqrt{-\gD/108}$
lead to the three different
real roots of $f(x)=0$. 

In this case the imaginary parts thus cancel in \eqref{t3b} for any
admissible
choice of cube roots.
This can also be seen as follows:
since $-q/2+\sqrt{-\gD/108}$ and $-q/2-\sqrt{-\gD/108}$ are complex
conjugates, we may, and have to, choose
cube roots of them that are complex conjugates in \eqref{t3b}; recall that
the product of these cube roots has to be $-p/3$, which is real. Hence,
Cardano's formula \eqref{t3b} for the roots may be written 
\begin{equation}\label{ci0}
   -\frac b3 +
  \sqrt[3]{-\frac q2+\sqrt{\frac{-\gD}{108}}}
+
\overline{\sqrt[3]{-\frac q2+\sqrt{\frac{-\gD}{108}}}}
=
   -\frac b3 +
2\Re  \sqrt[3]{-\frac q2+\sqrt{\frac{-\gD}{108}}}.
\end{equation}

Every complex number may be represented by a pair of real numbers, \viz{}
its real and imaginary parts, but taking the cube root of a complex number
may \emph{not} be reduced to a combination of real cube roots (or real
square and higher
roots) and usual algebraic algebraic operations.
In fact, it can be shown by Galois theory that 
if $f$ is any polynomial with rational coefficients (\ie, $f(x)\in\bbQ[x]$)
such that $f$ is irreducible over $\bbQ$ and has positive discriminant,
and $\ga$ is a root of $f$, then $\ga$ cannot be expressed
by real radicals; 
in other words, there does not exist a sequence of field extensions 
$\bbQ=F_0\subset F_1\subset F_2\dots \subset F_N$ where
$F_k=F_{k-1}[u_k]$ for some real $u_k$ with $u_k^{n_k}\in F_{k-1}$ for some
positive integer $n_k$, $k=1,\dots,N$, and $\ga\in F_N$;
see  \cite[Section 8.8]{vdW1} or \cite[Section 8.6]{Cox}. 
(Here $\bbQ$ may be replaced by any subfield of $\bbR$.) 

The case $\gD>0$ is known as the 
\emph{casus irreducibilis}; in this case, thus the equation
cannot (in general) be solved by radicals using only real numbers
(somewhat paradoxically, since the answers all are real).
This case is, by \eqref{gD}, characterized by $-4p^3>27q^2$, or equivalently
\begin{align}\label{ci1}
  p<0 \quad\text{and}\quad 4|p|^3>27q^2\ge0
\end{align}
or
\begin{equation}\label{ci2}
    p<0 \quad\text{and}\quad |p/3|^3>(q/2)^2.
\end{equation}

An alternative to Cardano's formula \eqref{ci0} in the case $\gD>0$ are the
following trigonometric formulas, which involves only real numbers but use
transcendental functions instead of algebraic expressions.

\begin{theorem}
  \label{TTrig}
If the real cubic polynomial $f(x)=x^3+bx^2+cx+d$ has positive discriminant
$\gD>0$, 
or equivalently, \eqref{ci2} holds,
then $f$ has three real roots given by
\begin{equation}
  \label{cos}
-\frac{b}3+2\sqrt{\frac{-p}3}
 \cos\lrpar{\frac13\arccos\lrpar{\frac{-q/2}{(-p/3)\qqc}}}
\end{equation}
where different branches of $\arccos$ yield the three different roots.

Equivalently, the roots are given by the three different values of
\begin{equation}
  \label{sin}
-\frac{b}3+2\sqrt{\frac{-p}3}
 \sin\lrpar{\frac13\arcsin\lrpar{\frac{q/2}{(-p/3)\qqc}}}.
\end{equation}
\end{theorem}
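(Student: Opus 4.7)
The plan is to reduce $f$ to the depressed cubic $g(y)=y^3+py+q$ via $y=x+b/3$, then apply Cardano's formula (\refT{T3}) and convert the complex cube roots into trigonometric form. By \refT{TR3}(i) all three roots of $f$ are real, and the equivalent characterization \eqref{ci2} gives $p<0$ together with $(q/2)^2\le(-p/3)^3$, so $\sqrt{-p/3}$ is real and the argument of $\arccos$ in \eqref{cos} lies in $[-1,1]$.

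First I compute the modulus of the complex-conjugate pair appearing under the cube roots in \eqref{t3b}. Since $\gD>0$ one has $\sqrt{-\gD/108}=\ii\sqrt{\gD/108}$, and, using $\gD/108=-(p/3)^3-(q/2)^2$,
\[
  \bigabs{-q/2+\ii\sqrt{\gD/108}}^2=(q/2)^2+\gD/108=-(p/3)^3=(-p/3)^3.
\]
Hence $-q/2+\ii\sqrt{\gD/108}=(-p/3)\qqc e^{\ii\theta}$, where $\theta=\arccos\bigpar{(-q/2)/(-p/3)\qqc}$.

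Second, the three cube roots of this number are $(-p/3)\qq e^{\ii(\theta+2\pi k)/3}$, $k=0,1,2$; for each $k$ the cube root of $-q/2-\ii\sqrt{\gD/108}$ required in \eqref{t3b} (so that the product equals $-p/3$) is the complex conjugate, and the sum of the pair is $2\sqrt{-p/3}\,\cos\bigpar{(\theta+2\pi k)/3}$. Translating by $-b/3$ gives the $k$-th root of $f$, and the three values of $k$ correspond precisely to the three branches of $\arccos$ in \eqref{cos}.

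For the sine formula \eqref{sin} I would substitute $y=2\sqrt{-p/3}\,s$ in $g(y)=0$; after dividing by $2(-p/3)\qqc$ this becomes the triple-angle equation
\[
  4s^3-3s=(-q/2)/(-p/3)\qqc.
\]
Setting $s=\sin\chi$ and using $\sin(3\chi)=3\sin\chi-4\sin^3\chi$ converts this to $\sin(3\chi)=(q/2)/(-p/3)\qqc$, and the three branches of $\chi=\tfrac13\arcsin\bigpar{(q/2)/(-p/3)\qqc}$ give the three values in \eqref{sin}. The argument is essentially bookkeeping; the one point requiring care is enforcing that the two summands in Cardano's formula be complex conjugates (via the product condition $uv=-p/3$), which makes their sum manifestly real and equal to $2\Re$ of either cube root, as already noted in \eqref{ci0}.
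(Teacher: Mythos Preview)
Your derivation of the cosine formula \eqref{cos} is essentially identical to the paper's: write $u^3=-q/2+\ii\sqrt{\gD/108}$ in polar form, take the cube root, and observe that the admissible $v$ (with $uv=-p/3$) is the complex conjugate, so the root is $-b/3+2\Re u$. The only cosmetic difference is that the paper packages this last observation as \eqref{ci0} beforehand.

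Your route to the sine formula \eqref{sin}, however, is genuinely different. The paper obtains \eqref{sin} from \eqref{cos} by the one-line angle shift $\psi\=\gf+3\pi/2$, noting $\sin\psi=-\cos\gf$ and $\sin(\psi/3)=\cos(\gf/3)$. You instead substitute $y=2\sqrt{-p/3}\,s$ directly into $g(y)=0$, reduce to $4s^3-3s=(-q/2)/(-p/3)\qqc$, and invoke the triple-angle identity $\sin(3\chi)=3\sin\chi-4\sin^3\chi$. Your argument is self-contained and would in fact yield \eqref{cos} just as directly (setting $s=\cos\theta$ and using $\cos(3\theta)=4\cos^3\theta-3\cos\theta$), thereby bypassing Cardano's formula altogether; the paper's approach is shorter precisely because it reuses \eqref{cos}, but it does not expose the triple-angle structure as transparently as yours does.
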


\begin{proof}
  Let $z\=u^3=-q/2+\sqrt{-\gD/108}=-q/2+\ii\sqrt{\gD/108}$, see \eqref{u3}.
Then, using \eqref{gD},
  \begin{equation*}
	|z|^2=\frac{q^2}{4}+\frac{\gD}{108}
=-\frac{p^3}{27}=\parfrac{-p}3^3
=\left|\frac p3\right|^3.
  \end{equation*}
We write $z$ in polar coordinates: $z=re^{\ii\gf}$, where thus
$r=|z|=(|p|/3)\qqc$ and 
\begin{equation*}
  \cos\gf=\frac{\Re z}{|z|}=\frac{-q/2}{r}
=\frac{-q/2}{(-p/3)\qqc}.
\end{equation*}
By \eqref{ci0}, a root of $f$ is given by
\begin{equation*}
  -\frac b3 + 2 \Re z\qqq
=
  -\frac b3 + 2 \Re r\qqq e^{\ii \gf/3}
=
  -\frac b3 + 2 r\qqq \cos\xpar{ \gf/3},
\end{equation*}
which yields \eqref{cos}, with different choices of $\gf$ yielding the
three roots.

To see \eqref{sin}, let $\psi\=\gf+3\pi/2$ and note that
$\sin\psi=-\cos\gf$ and $\sin(\psi/3)=\cos(\gf/3)$.
\end{proof}

In \eqref{cos} and \eqref{sin} we find the different roots by choosing
different values of $\arccos$ or $\arcsin$. Often it is more convenient to
make a single choice (for example the principal value with $0\le\gf\le\pi$ or
$-\pi/2\le\psi\le\pi/2$, but any choice will do).

\begin{theorem}
  \label{TTrig2}
Suppose that the real cubic polynomial $f(x)=x^3+bx^2+cx+d$ has positive
discriminant $\gD>0$,
or equivalently, that \eqref{ci2} holds.
Then, for any choice of
\begin{equation}\label{gf}
  \gf\=\arccos\lrpar{\frac{-q/2}{(-p/3)\qqc}},
\end{equation}
$f$ has three real roots 
\begin{equation}
  \label{cos2}
-\frac{b}3+2\sqrt{\frac{-p}3} \cos{\frac\gf3},
\qquad
-\frac{b}3-\sqrt{\frac{-p}3}
  \lrpar{ \cos{\frac\gf3}\pm\sqrt3\, \sin{\frac\gf3}}.
\end{equation}
Similarly, for any choice of
\begin{equation}\label{psi}
  \psi\=\arcsin\lrpar{\frac{q/2}{(-p/3)\qqc}},
\end{equation}
$f$ has three real roots 
\begin{equation}
  \label{sin2}
-\frac{b}3+2\sqrt{\frac{-p}3} \sin{\frac\psi3},
\qquad
-\frac{b}3-\sqrt{\frac{-p}3}
  \lrpar{ \sin{\frac\psi3}\pm\sqrt3\, \cos{\frac\psi3}}.
\end{equation}
\end{theorem}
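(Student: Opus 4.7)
The plan is to derive Theorem~\ref{TTrig2} from Theorem~\ref{TTrig} by making explicit how the three branches of $\arccos$ (resp.\ $\arcsin$) contribute once we fix a single reference value $\gf$ (resp.\ $\psi$). Theorem~\ref{TTrig} already asserts that as the branch of $\arccos$ varies, the expression $-b/3 + 2\sqrt{-p/3}\cos(\arccos(\cdot)/3)$ cycles through the three real roots of $f$. So all that remains is to rewrite the three values of $\cos(\theta/3)$, as $\theta$ runs over the branches, in terms of $\cos(\gf/3)$ and $\sin(\gf/3)$ for our fixed $\gf$.

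For the cosine formula, I would argue as follows. The full preimage of $-q/2 \big/ (-p/3)\qqc$ under $\cos$ is the set $\{\pm\gf + 2\pi k : k\in\bbZ\}$, so the branches of $\arccos$ divided by $3$ are $\{\pm\gf/3 + 2\pi k/3 : k\in\bbZ\}$. Since cosine is even and has period $2\pi$, the distinct values of $\cos(\theta/3)$ as $\theta$ ranges over these branches are exactly
\[
\cos(\gf/3),\qquad \cos(\gf/3 + 2\pi/3),\qquad \cos(\gf/3 - 2\pi/3).
\]
Applying the addition formula,
\[
\cos(\gf/3 \pm 2\pi/3) = -\tfrac12 \cos(\gf/3) \mp \tfrac{\sqrt3}{2}\sin(\gf/3),
\]
so $2\sqrt{-p/3}\cos(\gf/3 \pm 2\pi/3) = -\sqrt{-p/3}\bigl(\cos(\gf/3) \pm \sqrt3\,\sin(\gf/3)\bigr)$. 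Adding $-b/3$ recovers the three expressions in \eqref{cos2}.

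For \eqref{sin2}, I would run the analogous argument. The preimage of $q/2 \big/ (-p/3)\qqc$ under $\sin$ consists of $\{\psi + 2\pi k\} \cup \{\pi - \psi + 2\pi k\}$, so dividing by $3$ and applying $\sin$ gives the three distinct values $\sin(\psi/3)$ and $\sin(\psi/3 \pm 2\pi/3)$ (the second family $\sin((\pi-\psi)/3 + 2\pi k/3)$ yields the same three values, as one checks with the identity $\sin(\pi/3 - \psi/3) = \cos(\psi/3 + \pi/6)$ or more directly by noting that both families parametrize the three roots of the same cubic via Theorem~\ref{TTrig}). The sine addition formula $\sin(\psi/3 \pm 2\pi/3) = -\tfrac12 \sin(\psi/3) \pm \tfrac{\sqrt3}{2}\cos(\psi/3)$ then produces \eqref{sin2}.

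There is no real obstacle: the whole content is bookkeeping of branches and one application of the cosine/sine addition formula. The only mild subtlety is verifying that the two natural families of branches of $\arcsin$ collapse to the same three values of $\sin(\cdot/3)$, which must be the case since Theorem~\ref{TTrig} guarantees that only three real roots appear. This is the one point I would want to verify carefully rather than wave away.
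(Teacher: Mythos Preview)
Your argument is correct and essentially identical to the paper's: enumerate the branches of $\arccos$ (resp.\ $\arcsin$), reduce modulo $2\pi$ after dividing by $3$, and apply the addition formula. The one point you flag for care---that the second $\arcsin$ family $\{\pi-\psi+2\pi k\}$ collapses to the same three values---the paper handles by simply writing that family as $\{3\pi-\psi+2\pi k\}$, so that $(3\pi-\psi)/3=\pi-\psi/3$ and $\sin(\pi-\psi/3)=\sin(\psi/3)$ is immediately one of the three values already listed.
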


\begin{proof}
The other possible values of $\arccos$ in \eqref{gf} are
$\pm\gf+2\pi n$, $n\in\bbZ$. Hence, the three values of the $\cos$ in
\eqref{cos} are $\cos(\gf/3)$ and 
\begin{equation*}
\cos\frac{\gf\pm2\pi}3
=\cos\frac\gf3\cos\frac{2\pi}3\mp\sin\frac\gf3\sin\frac{2\pi}3
=-\frac12\cos\frac\gf3 \mp\frac{\sqrt3}2\sin\frac\gf3  ,
\end{equation*}
and \eqref{cos} yields \eqref{cos2}.  

Similarly,
the other possible values of $\arcsin$ in \eqref{psi} are
$\psi+2\pi n$ and $3\pi-\psi+2\pi n$, $n\in\bbZ$, and
the three values of the $\sin$ in
\eqref{sin} are $\sin(\psi/3)$ and 
\begin{equation*}
\sin\frac{\psi\pm2\pi}3
=\sin\frac\psi3\cos\frac{2\pi}3\pm\cos\frac\psi3\sin\frac{2\pi}3
=-\frac12\sin\frac\psi3 \pm\frac{\sqrt3}2\cos\frac\psi3.
\qedhere
\end{equation*}
\end{proof}

\begin{remark}\label{RAci}
  The formulas \eqref{cos}--\eqref{sin2} are
 meaningful (with real quantities only),
  exactly when $p<0$ and $|q/2|\le |p/3|\qqc$, \ie, when \eqref{ci2} holds 
or in the limiting case $\gD=0$ and $p\neq0$. 
The formulas \eqref{cos}--\eqref{sin2} are
  valid in the latter case too, and then yield the roots 
$-b/3+\gd$, $-b/3+\gd$, $-b/3-2\gd$, where
$\gd=\pm\sqrt{-p/3}$ with $\sign(\gd)=\sign(q)$,
as found more easily in \refSS{SSAD0}.
\end{remark}

\begin{example}\label{Eci1}
Let $f(x)=x^3-x$, which evidently has the three real roots $0$, $\pm1$.

We have $b=0$, $p=c=-1$, $q=d=0$, and, by \eqref{gD}, $\gD=4$ (which is
verified by \eqref{gD0}).
Hence, 
$u^3=-q/2+\sqrt{-\gD/108}=\sqrt{-1/27}=3^{-3/2}\ii$,
and we find the three cube roots
\begin{align*}
  u_1&=\frac{-\ii}{\sqrt3},
\\
u_2&=\frac{-\ii}{\sqrt{3}}\Bigpar{-\frac12+\ii\frac{\sqrt3}{2}}
=\frac12+\frac{\ii}{2\sqrt3},
\\
u_3&=\frac{-\ii}{\sqrt{3}}\Bigpar{-\frac12-\ii\frac{\sqrt3}{2}}
=-\frac12+\frac{\ii}{2\sqrt3}.
\end{align*}
Hence \eqref{t3a} and \eqref{t3b} yield the three roots of $f$ as
\begin{align*}
  u_1+\overline{u_1}&=0, &
  u_2+\overline{u_2}&=1, &
  u_3+\overline{u_3}&=-1.
\end{align*}

Alternatively, we may use the trigonometric formula \eqref{cos}.
We have $\arccos\bigpar{-(q/2)\big/(-p/3)^{3/2}}=\arccos0=\pi/2+n\pi$,
$n\in\bbZ$, and thus the
three roots are
\begin{align*}
  \frac{2}{\sqrt3}\cos\frac{\pi}{6}&=1, &
  \frac{2}{\sqrt3}\cos\frac{5\pi}{6}&=-1, &
  \frac{2}{\sqrt3}\cos\frac{9\pi}{6}&=0.
\end{align*}

Similarly, \eqref{sin} yields,
since $\arcsin\bigpar{(q/2)\big/(-p/3)^{3/2}}=\arcsin0=n\pi$, $n\in\bbZ$,
the three roots as
\begin{align*}
  \frac{2}{\sqrt3}\sin0&=0, &
  \frac{2}{\sqrt3}\sin\frac{2\pi}{3}&=1, &
  \frac{2}{\sqrt3}\sin\frac{4\pi}{3}&=-1.
\end{align*}

Using \eqref{cos2} with $\gf=\pi/2$ we find the same roots given as
\begin{equation*}
    \frac{2}{\sqrt3}\cos\frac{\pi}{6},
\qquad
-  \frac{1}{\sqrt3}\lrpar{\cos\frac{\pi}{6}\pm \sqrt3\sin\frac{\pi}{6}}
\end{equation*}
while \eqref{sin2} with $\psi=0$ yields
\begin{equation*}
    \frac{2}{\sqrt3}\sin0=0,
\qquad
-  \frac{1}{\sqrt3}\lrpar{\sin0\pm \sqrt3\cos0}=\mp1.
\end{equation*}
\end{example}

\begin{example}
  \label{Eci2}
Let $f(x)=x^3-7x-6$, which has the roots $3$, $-1$, $-2$.
We have $p=-7$, $q=-6$ and $\gD=400$.
Thus, 
$u^3=-q/2+\sqrt{-\gD/108}=3+\sqrt{-100/27}=3+\frac{10}{3^{3/2}}\ii$,
and we find the three cube roots
\begin{align*}
  u_1&=-1+\frac{2}{\sqrt3}\ii,
&
u_2&=-\frac12 -\frac{5}{2\sqrt{3}}\ii,
&
u_3&=\frac32 +\frac{1}{2\sqrt{3}}\ii.
\end{align*}
Hence, \eqref{t3a} and \eqref{t3b} yield the three roots of $f$ as
\begin{align*}
 2\Re u_1&=-2, &
 2\Re u_2&=-1, &
2\Re  u_3&=3.
\end{align*}
The trigonometric solution \eqref{cos} yields
\begin{equation}
  2\sqrt{\frac 73}
  \cos\lrpar{\frac13\arccos\sqrt{\frac{243}{343}}+\frac{2\pi k}3},
\qquad k=0,1,2,
\end{equation}
and it is far from obvious that this yields three integers $3$, $-2$, $-1$.
\end{example}

\begin{example}
  \label{Eci3}
Let $f(x)=x^3-7x^2+14x-8$, which has the roots 1, 2, 4.
Then, by \eqref{p}--\eqref{gD}, $p=-7/3$, $q=-20/27$ and $\gD=36$.
Thus,
$u^3=-q/2+\sqrt{-\gD/108}=\frac{10}{27}+\frac{\ii}{\sqrt 3}$,
and we find the three cube roots
\begin{align*}
  u_1&=-\frac23+\frac{1}{\sqrt3}\ii,
&
u_2&=-\frac16 -\frac{\sqrt3}{2}\ii,
&
u_3&=\frac56 +\frac{1}{2\sqrt{3}}\ii.
\end{align*}
Hence,  \eqref{t3b} yields the three roots of $f$ as
\begin{align*}
\frac73+ 2\Re u_1&=1, &
\frac73+ 2\Re u_2&=2, &
\frac73+2\Re  u_3&=4.
\end{align*}
The trigonometric solution \eqref{cos} yields
\begin{equation}
\frac73+ \frac{ 2\sqrt{7}}{3}
  \cos\lrpar{\frac13\arccos\frac{10}{7\sqrt7}+\frac{2\pi k}3},
\qquad k=0,1,2,
\end{equation}
which, again surprisingly, yields three integers $4$, $1$, 2.
\end{example}

\begin{example}
  \label{Eci4}
Cardano saw the problem with the \ci{} and asked Tartaglia about it, giving
$x^3=9x+10$ as an example, see \refR{Rci}.
In this case, thus $f(x)=x^3-9x-10$, so $p=-9$, $q=-10$ and 
$\gD/108=(-p/3)^3-(q/2)^2=2$.
Thus,
$u^3=-q/2+\sqrt{-\gD/108}=5+{\sqrt 2}\ii$,
and we find the three cube roots
\begin{align*}
  u_1&=-1+{\sqrt2}\,\ii,
&
u_2&=\frac{1+\sqrt6}2+\frac{\sqrt3-\sqrt2}{2}\ii,
&
u_3&=\frac{1-\sqrt6}2-\frac{\sqrt3+\sqrt2}{2}\ii.
\end{align*}
Hence,  \eqref{t3a} and \eqref{t3b} yield the three roots of $f$ as
\begin{align*}
 2\Re u_1&=-2, &
 2\Re u_2&=1+\sqrt6, &
2\Re  u_3&=1-\sqrt6.
\end{align*}
\end{example}

\begin{example}
  \label{Eci5}
Cardano \cite[Chapter XIII]{ArsMagna} considered also the 
equation $y^3=8y+3$.
He saw that $y=3$ is one solution (without discussing the problem of the
\ci, see \refR{Rci}).  
In modern terms he then found the other
two solutions by finding the roots of the quadratic polynomial
$(y^3-8y-3)/(y-3)=y^2+3y+1$; he gave a general formula for this.
(The other two solutions are $-(3\pm\sqrt5)/2$; these
are negative, and Cardano changes the sign and
interprets the result $(3\pm\sqrt5)/2$ as the two positive solutions of
$x^3+3=8x$.)

Let us instead use Cardano's formula.
In this case, 
$p=-8$, $q=-3$ and $\gD=-4p^3-27q^2=1805$.
Thus, 
\begin{equation}
 u^3=-q/2+\sqrt{-\gD/108}=\frac32+\frac{19\sqrt{5}}{6\sqrt 3}\ii, 
\end{equation}
and we find the three cube roots
\begin{align*}
  u_1&=\frac32+\frac{\sqrt5}{2\sqrt3}\ii,
&
u_2&=\frac{\sqrt5-3}4-\frac{9+\sqrt5}{4\sqrt3}\ii,
&
u_3&=-\frac{\sqrt5+3}4+\frac{9-\sqrt5}{4\sqrt3}\ii.
\end{align*}
Hence,  \eqref{t3a} and \eqref{t3b} yield the three solutions of $y^3=8y+3$ as
\begin{align*}
 2\Re u_1&=3, &
 2\Re u_2&=-(3-\sqrt5)/2, &
2\Re  u_3&=-(3+\sqrt5)/2.
\end{align*}
\end{example}

\begin{example}
  \label{Eci6}
\label{Ecilast}
Bombielli (1550) 
considered the equation $y^3-15y-4=0$.
Cardano's formula \eqref{t3a} yields the roots as
\begin{equation}\label{bomb}
\sqrt[3]{2+\sqrt{-121}}+\sqrt[3]{2-\sqrt{-121}}
= \sqrt[3]{2+11\ii}+\sqrt[3]{2-11\ii}.
\end{equation}
Bombielli noted that 4 is a root, and showed in a pioneering calculation
with complex numbers that
$(2\pm\ii)^3=2\pm11\ii$, and thus \eqref{bomb} correctly yields the root
$(2+\ii)+(2-\ii)=4$.

The two other cube roots of $2+11\,\ii$ are
\begin{align*}
u_2&=-1-\frac{\sqrt3}2+\frac{2\sqrt3-1}{2}\ii,
&
u_3&=-1+\frac{\sqrt3}2+\frac{-2\sqrt3-1}{2}\ii.
\end{align*}
Hence,  the three solutions of $y^3-15y-4=0$ are
$4$ and $-2\pm\sqrt 3$.
\end{example}

\subsection{Further comments for real coefficients}

\begin{remark}
  In the \emph{casus irreducibilis} we thus always obtain the roots as
  complicated expression involving complex cube roots, even when the roots
  are, for example, simple integers. (See Examples \ref{Eci1}--\ref{Ecilast} for
  simple cases.) 

Also in the case of a single real root, Cardano's formula typically yields
complicated expressions (but now involving real roots only)
also for simple integer solutions.
\end{remark}

\begin{example}
  The equation $x^3+6x=20$ \cite[Chapter XI]{ArsMagna} has the real root
  $x=2$ (and the complex roots $-1\pm3\ii$).
Cardano's formula yields the root as
\begin{equation}\label{ebad2}
  \sqrt[3]{10+\sqrt{108}} +   \sqrt[3]{10-\sqrt{108}}
=  \sqrt[3]{10+\sqrt{108}} -   \sqrt[3]{\sqrt{108}-10}.
\end{equation}
This indeed equals 2 because $\sqrt{108}\pm10=(\sqrt 3\pm1)^3$, but this is
far from obvious and it is hard to see how \eqref{ebad2} can be simplified
without knowing the answer.
\end{example}

\begin{remark}
The trigonometric formulas \eqref{cos}--\eqref{sin2} are valid also when
$\gD<0$ (and, more generally, for arbitrary complex coefficients with
$p\neq0$), but then the angles $\gf$ and $\psi$ are complex and the formulas
less useful.

For real coefficients with $\gD<0$ (the case of a single real root), we can
choose $\gf$ in \eqref{gf} or $\psi$ in \eqref{psi}  purely imaginary
(after a change of sign of the roots and $q$ in the case $p<0<q$), and the
formulas \eqref{cos2} and \eqref{sin2} can be rewritten with real hyperbolic
functions as follows 
\cite{Holmes}. 
\end{remark}

\begin{theorem}
If the real cubic polynomial $f(x)=x^3+bx^2+cx+d$ has negative discriminant
$\gD<0$, then $f$ has one real and two conjugate complex roots given as follows.
\begin{romenumerate}
\item
If $p<0$ and $q/2<-|p/3|\qqc$, then the roots are
\begin{align*}
-\frac{b}3&+2\sqrt{\frac{-p}3}
 \cosh\lrpar{\frac13\arccosh\lrpar{\frac{-q/2}{(-p/3)\qqc}}},
\\
-\frac{b}3&-\sqrt{\frac{-p}3}
  \lrpar{ \cosh\lrpar{\frac13\arccosh\lrpar{\frac{-q/2}{(-p/3)\qqc}}}
\pm\sqrt3\,\ii \sinh\lrpar{\frac13\arccosh\lrpar{\frac{-q/2}{(-p/3)\qqc}}} }.
\end{align*}
\item
If $p<0$ and $q/2>|p/3|\qqc$, then the roots are
\begin{align*}
-\frac{b}3&-2\sqrt{\frac{-p}3}
 \cosh\lrpar{\frac13\arccosh\lrpar{\frac{q/2}{(-p/3)\qqc}}},
\\
-\frac{b}3&+\sqrt{\frac{-p}3}
  \lrpar{ \cosh\lrpar{\frac13\arccosh\lrpar{\frac{q/2}{(-p/3)\qqc}}}
\pm\sqrt3\,\ii \sinh\lrpar{\frac13\arccosh\lrpar{\frac{q/2}{(-p/3)\qqc}}} }.
\end{align*}

\item
If $p>0$, then the roots are
\begin{align*}
-\frac{b}3&-2\sqrt{\frac{p}3}
 \sinh\lrpar{\frac13\arcsinh\lrpar{\frac{q/2}{(p/3)\qqc}}},
\\
-\frac{b}3&+\sqrt{\frac{p}3}
  \lrpar{ \sinh\lrpar{\frac13\arcsinh\lrpar{\frac{q/2}{(p/3)\qqc}}}
\pm\sqrt3\,\ii \cosh\lrpar{\frac13\arcsinh\lrpar{\frac{q/2}{(p/3)\qqc}}} }.
\end{align*}
\end{romenumerate}
\end{theorem}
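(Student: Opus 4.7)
The strategy mirrors the proofs of \refT{TTrig} and \refT{TTrig2}, with hyperbolic substitutions replacing the trigonometric ones. In each case I would start from Cardano's formula \refT{T3} and from the fact that when $\gD < 0$ the quantity $-\gD/108 = (p/3)^3+(q/2)^2$ is positive, so $u^3 = -q/2 + \sqrt{(p/3)^3+(q/2)^2}$ and $v^3 = -q/2 - \sqrt{(p/3)^3+(q/2)^2}$ are both \emph{real}. I would then select real cube roots $u,v \in \bbR$ with $uv = -p/3$ (using \eqref{uv}; this is possible because $u^3 v^3 = -(p/3)^3$ is real, and the real cube root function is a bijection). The unique real root of $f$ is then $-b/3 + u + v$, and the two complex conjugate roots are $-b/3 + \go^k u + \go^{-k} v$ for $k=1,2$.

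For case (i), put $\theta \= \arccosh\bigpar{-q/2 \,/\, (-p/3)\qqc} \ge 0$. Using $\cosh^2\theta - \sinh^2\theta = 1$ one verifies $u^3 = (-p/3)\qqc e^{\theta}$ and $v^3 = (-p/3)\qqc e^{-\theta}$, so the real cube roots are $u = \sqrt{-p/3}\,e^{\theta/3}$ and $v = \sqrt{-p/3}\,e^{-\theta/3}$; the product condition $uv = -p/3$ is automatic. Then $u+v = 2\sqrt{-p/3}\,\cosh(\theta/3)$, which yields the first listed root. Expanding $\go u + \go^2 v$ and $\go^2 u + \go v$ with $\go = -1/2 + \sqrt 3\,\ii/2$ gives $-\cosh(\theta/3) \pm \sqrt 3\,\ii \sinh(\theta/3)$ (times $\sqrt{-p/3}$), producing the conjugate pair of complex roots.

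Case (ii) is symmetric to (i): the substitution $q \mapsto -q$ swaps $u^3$ with $v^3$ and negates all three roots, so one may either quote case (i) for the polynomial $-f(-x)$, or equivalently set $\phi \= \arccosh\bigpar{q/2 \,/\, (-p/3)\qqc}$ and repeat the computation, obtaining $u^3 = -(-p/3)\qqc e^{-\phi}$ and $v^3 = -(-p/3)\qqc e^{\phi}$, hence $u+v = -2\sqrt{-p/3}\,\cosh(\phi/3)$. Case (iii) is analogous but uses $\psi \= \arcsinh\bigpar{q/2/(p/3)\qqc}$ and the identity $1 + \sinh^2\psi = \cosh^2\psi$, giving $u^3 = (p/3)\qqc e^{-\psi}$ and $v^3 = -(p/3)\qqc e^{\psi}$; the novelty is that $u^3 v^3 = -(p/3)^3 < 0$, so the real cube roots $u = \sqrt{p/3}\,e^{-\psi/3}$ and $v = -\sqrt{p/3}\,e^{\psi/3}$ have opposite signs (consistent with $uv = -p/3 < 0$). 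This produces the real root $-b/3 - 2\sqrt{p/3}\,\sinh(\psi/3)$, and the same $\go$-expansion as in (i) delivers the $\sinh(\psi/3) \pm \sqrt 3\,\ii \cosh(\psi/3)$ pair.

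The only real obstacle is sign bookkeeping: making sure in each case that the \emph{real} cube roots are chosen so that $uv = -p/3$, and correctly unwinding $\go^k u + \go^{-k} v$ into its real and imaginary parts so that the conjugate pair of complex roots appears in the form stated. None of the calculations is deep — they amount to replacing the factor $e^{\ii\gf/3}$ of the trigonometric proof by a real exponential $e^{\pm\theta/3}$ (or $e^{\pm\psi/3}$) — but each of the three cases needs its own short verification because the sign of $u^3$, $v^3$ and the relevant hyperbolic identity differ.
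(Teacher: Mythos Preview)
Your proof is correct. The paper does not supply a formal proof of this theorem; it only gives the remark immediately preceding it, which says that when $\gD<0$ one may take $\gf$ or $\psi$ in \eqref{gf}--\eqref{psi} purely imaginary and then rewrite \eqref{cos2}--\eqref{sin2} via $\cos(\ii\theta)=\cosh\theta$, $\sin(\ii\theta)=\ii\sinh\theta$. Your argument is essentially this same idea carried out explicitly, though you route it directly through Cardano's formula (exploiting that $u^3,v^3\in\bbR$ when $\gD<0$) rather than through \refT{TTrig2}; the two routes differ only cosmetically, and your version has the mild advantage of making the sign and cube-root choices transparent in each case.
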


\begin{remark}\label{Rnickalls}
The three cases in \refT{TR3} can also be seen geometrically by considering
the graph of $f$ (or $g$) as follows (based on \cite{Nickalls:cubic}).

Denote the stationary points of $g$, \ie{} the roots of $g'(y)=0$, by
$\pm\gd$; thus the stationary points of $f$ are 
$-\frac b3\pm\gd$. 
(Note also that $f$ has an inflection point at $(-\frac b3,q)$.)
Since $g'(y)=3y^2+p$, we have
\begin{equation}\label{delta}
  \gd=\sqrt{-p/3}.
\end{equation}
In particular, $\gd$ is either real ($p\le0$) or imaginary ($p>0$); when
$p<0$, we choose for convenience the positive square root in \eqref{gd}.
Let further 
\begin{equation}\label{h}
 h\=-\bigpar{g(\gd)-g(0)}=-\bigpar{\gd^3+p\gd}=2\gd^3.
\end{equation}
(We choose this sign so that $h>0$ when $\gd>0$.) Thus
\begin{equation}\label{erika}
  f(-\tfrac b3\pm\gd)=g(\pm\gd)=g(0)\mp h=q\mp h.
\end{equation}

If $\gd>0$, 
then $f$ thus has a local maximum at $-\frac b3-\gd$ with value $q+h$,
and a local minimum at $-\frac b3+\gd$ with value $q-h$.
Considering the graph of $f$, we see that $f(x)=0$ then has
three real roots if $0\in(q-h,q+h)$,
two real roots (of which one double) if $0=q\pm h$, and one (simple) real
root if $0\notin[q-h,q+h]$. We thus see the three different cases in
\refT{TR3}, with (i) if $h>|q|$, \ie{} $h^2>q^2$,
(ii) if $h<|q|$, \ie{} $h^2<q^2$, and
(iii) if $h=|q|$, \ie{} $h^2=q^2$.

In the limiting case $\gd=0$ (which entails $h=0$), $f$ has no local maximum
or minimum, but a saddle point at 
$-\frac b3$ with $f(-\frac b3)=g(0)=q$. In this case there is a triple root
(case (iii))
if $q=0$ and otherwise one simple real root (case (ii)).

If $\gd$ is imaginary (and non-zero), then $f'(x)\neq0$ for all real $x$,
and thus 
$f'(x)>0$
(since this certainly holds for large $x$); hence $f$ is strictly increasing
and $f(x)=0$ has a single, simple root for any $q$ (case (ii)). In this
case, $h$ is 
imaginary too, so $h^2<0\le q^2$.

We thus see that in all cases, \refT{TR3} holds with case 
(i) when $h^2-q^2>0$,
(ii) when $h^2-q^2<0$,
and (iii) when $h^2-q^2=0$.
This is also confirmed by a simple calculation showing that 
\begin{equation}
  \gD=-4p^3-27 q^2=108 \gd^6-27q^2 =27(h^2-q^2).
\end{equation}

Using parameters $\gd$ and $h$ also simplify the formulas above a little.
Since $h^2=4\gd^6=-4p^3/27$, \eqref{u3} becomes
\begin{equation}
  u^3=\tfrac12\bigpar{-q+\sqrt{q^2-h^2}}
\end{equation}
so Cardano's formula \eqref{t3b} for the roots of $f$ becomes
\begin{equation}
  -\frac b3 +
\sqrt[3]{\tfrac12\bigpar{-q+\sqrt{q^2-h^2}}}
+ \sqrt[3]{\tfrac12\bigpar{-q-\sqrt{q^2-h^2}}}.
\end{equation}
Similarly, in the \emph{casus irreducibilis}, which now 
is characterized by $h>|q|$,
\eqref{cos} and \eqref{sin} can be written
\begin{equation}
  -\frac b3 + 2\gd\cos\lrpar{\frac13\arccos\Bigpar{-\frac q h}}
=  -\frac b3 + 2\gd\sin\lrpar{\frac13\arcsin\Bigpar{\frac q h}}.
\end{equation}
\end{remark}

\begin{remark}
Consider the case of a real cubic $f(x)=ax^3+bx^2+cx+d$ with a single real
root ($\gD<0$). 
A geometric way to find the two complex roots from the graph of $f$ (on
$\bbR$) has been given by \eg{} \cite{IrwinW,Henriquez}:
Let $A$ be the intersection of the curve $y=f(x)$ and the $x$-axis (\ie, the
real root), and let $\ell$ be a tangent from $A$ to the curve. If the tangent
point has $x$-coordinate $x_0$, and the slope of the tangent is $k$, then
the complex roots are $x_0\pm\sqrt{k}\,\ii$.
\end{remark}

\section[History]{History%
\protect\footnote{This appendix is largely based on 
the Foreword (by Oystein Ore) and Preface (by T. Richard Witmer) to
the English translation of \ArsMagna,
the text itself of \ArsMagna,
Katscher \cite{MathDL:Tartaglia,Tartaglia},
\citet{vdW},
and 
\emph{The MacTutor History of Mathematics} \cite{MT} on the Internet 
(the articles
Quadratic, cubic and quartic equations;
Scipione del Ferro;
Nicolo Tartaglia;
Girolamo Cardano; 
Lodovico Ferrari;
Tartaglia versus Cardan); much more details can be found in these
references.
For the history after Cardano's \ArsMagna, see \citet{vdW}.}
}\label{Ahistory}

The solution to cubic equations was first found c.~1515 by Scipione
del Ferro (1465--1526) in Bologna, at least for some cases.
At this time, negative numbers were not used, nor was 0. 
Thus (in modern notation)
$x^3+px=q$, $x^3=px+q$ and $x^3+q=px$, with positive $p$ and $q$,
were regarded as different types
of equations.
(The third type was often ignored. We know today that it always has one
negative solution, which generally was disregarded, and either zero 
positive solutions or two (\ci); hence Cardano's formula will never yield a
positive solution using real roots only.
The negative solution was treated by Tartaglia and Cardano by, in modern
terms, changing the sign of  $x$, which transforms $x^3+q=px$ to $x^3=px+q$.)
Scipione del Ferro could solve the first type and possibly the second 
(researchers disagree).

\begin{remark}
There are 13 types of nontrivial cubic equations with positive coefficients:
$x^3+cx=d$, $x^3=cx+d$, $x^3+d=cx$, 
$x^3=bx^2+d$, $x^3+bx^2=d$, $x^3+d=bx^2$,
$x^3+bx^2+cx=d$, $x^3+cx=bx^2+d$, $x^3+bx^2=cx+d$, 
$x^3=bx^2+cx+d$, $x^3+d=bx^2+cx$, $x^3+cx+d=bx^2$, $x^3+bx^2+d=cx$.
These are, for example, discussed
separately  in Cardano's \emph{Ars Magna} \cite[Chapters XI--XXIII]{ArsMagna}.

Similarly, Cardano \cite[Chapter V]{ArsMagna} considers 
three different types of quadratic equations:
$x^2=bx+c$, $x^2+bx=c$, $x^2+c=bx$ (as did al-Khwarizmi c.~800,
while Brahmagupta in 628 used both zero and negative numbers and treated all
quadratic equations together), 
and many types of quartic equations, see \refR{R4} below.

Note that Cardano discusses negative numbers and negative \emph{solutions} 
(called ``false solutions'')
to equations 
\cite[in particular  Chapters I and  XXXVII]{ArsMagna};
however, he does not consider negative
\emph{coefficients} (at least not usually, although there are occasional
uses in a few examples, 
for example \cite[Chapter XXXIX, Problem  IX]{ArsMagna}). 

Cardano even makes a
pioneering tentative use of imaginary numbers and complex solutions 
\cite[Chapter  XXXVII, Rule II]{ArsMagna}, although he clearly does not
understand 
them and he seems sceptical to his calculation.
Complex numbers were introduced in a consistent way somewhat
later by
Rafael Bombelli (1526--1572) 
in his book \emph{Algebra} (1572), where he also shows how to work with
negative numbers \cite[Rafael Bombielli]{MT}.
\end{remark}

\begin{remark}\label{Rreduce}
  It is claimed in \cite[Scipione del Ferro]{MT} that the reduction \eqref{fg} 
to an equation 
without quadratic term (which seems trivial to
us) was known at the time of del Ferro, but 
this seems incorrect, and I rather believe 
the claim by \cite{MathDL:Tartaglia} that del Ferro considered only
such cubics because the others were too difficult to be solved. 
See further \refR{Rred2}. 
\end{remark}

However, del Ferro kept his solution secret. The traditional story is that
he did not tell anyone about it until his deathbed in 1526, when
he told the solution to his student Antonio Maria Fior. (This seems a bit
exaggerated,  since 
his son-in-law Hannibal della Nave much later showed Cardano
a notebook written by del Ferro presenting the solution, but he certainly
told very few.)

Fior let it become known that
he could solve cubic equations (without disclosing the method).
This prompted Nicolo Tartaglia (1500--1557) in Venice to find solutions.
He first found a solution to some equations of the type $x^3+bx=d$. 
He claims
\cite[XIIII p.~12, XXV p.~15, p.~64]{Tartaglia} that he found the solution
to all such equations in 1530, 
but he really could solve (and construct) only special cases, in modern
terms having a negative integer solution.
A public contest was held between Fior and
Tartaglia in 1535,
where each was to solve 30 problems set by the other (within 40 or 50 days);
according to himself \cite[XXV p.~13]{Tartaglia},
Tartaglia managed to find the solutions to the two types 
$x^3+px=q$ and $x^3=px+q$ on 12 and 13 February 1535, only 8 days before the
deadline of the contest%
\footnote{According to  \cite[XXXI p.~29]{Tartaglia}, the
  contest was on 22 February, which yields a  discrepancy in the exact
  dates.}, 
and then
Tartaglia easily won by solving all 30 problems in 2 hours.
(Fior's problems, which are given in \cite[XXXI pp.~29--31]{Tartaglia}, 
were all of the type $x^3+px=q$, which he did not
believe that Tartaglia could solve.)

Girolamo Cardano (1501--1576) in Milan then invited Tartaglia, and
managed to make him disclose the method (25 March 1539), after Cardano had
promised 
Tartaglia to keep it secret until Tartaglia had published the method
himself (something Tartaglia never did, preferring to keep it secret
and regretting that he had told Cardano). Cardano worked on the
solution together with his young assistant Lodovico Ferrari (1522--1565),
who in 1541 found a solution to quartic equations. 

Cardano found out that the cubic equation had been solved by del Ferro
before Tartaglia, and used this as an excuse to break his promise to
Tartaglia and publish (in 1545) the solutions of cubic and quartic equations in
his large algebra book \emph{Ars Magna} \cite{ArsMagna}, where they form a
major part. (All 13 types of cubic equations are discussed separately
in detail, but
only some of the possible quartic equations, see below.)
Cardano introduces the solution of the cubic equation with:
\begin{quotation}
Scipio Ferro of Bologna well-nigh thirty years ago discovered this rule and
handed it on to Antonio Maria Fior of Venice, whose contest with Niccol\`o
Tartaglia of Brescia gave Niccol\`o occasion to discover it. He [Tartaglia]
gave it to
me in reponse to my entreaties, though withholding the demonstration. Armed
with this assistance, I sought out its demonstration in [various]
forms. This was very difficult. My version of it follows.
\cite[Chapter XI]{ArsMagna}
\end{quotation}

The publication led  to a bitter dispute between Tartaglia and
Cardano--Ferrari.
Tartaglia accused in a book \cite[XXXIIII p.~42]{Tartaglia} (1546)
Cardano of
breaking an oath to him to keep the solution secret; he also added some
insults against Cardano. 
This led to a series of equally insulting pamphlets (6 each)
by Ferrari (defending Cardano, who kept a low profile in the dispute)
and Tartaglia (renewing his accusations and insults), 
and finally to
a public contest between Tartaglia and Ferrari in Milan on 10 August
1548. (Each posed 62 problems to the other.
Ferrari won clearly; Tartaglia left Milan after the first day of the
contest, when he saw that he was losing.)

\begin{remark}\label{Rred2}
del Ferro, Fior and Tartaglia (with the exception $x^3+bx^2=d$ discussed above) 
considered only cubics without second degree term, see \refR{Rreduce},
It seems that the reduction \eqref{fg} of general cubic equations to this
case is due to Cardano, who in
 \cite{ArsMagna} uses this reduction in 9 of the 10 types with a
quadratic term (the exception is $x^3+d=bx^2$, which is reduced by the
substitution $x=d^{2/3}/y$).
(Cardano claims in the beginning of
\cite{ArsMagna} that those things to which he has not attached any name are
his own discoveries. This is of course  no proof that this reduction is his
own invention, but it suggests that he
regarded the reduction either as his own contribution or trivial.)
Note that
Cardano does the reduction separately for each type and that he
does not discuss the reduction in his earlier chapters on some
transformatons of equations.
Moreover, he surprisingly does \emph{not} use the corresponding reduction
for fourth degree 
equations
(see \refR{R4}).
Furthermore,  Tartaglia did not know this reduction (until he
read \cite{ArsMagna}); note
that Tartaglia himself only mentions cubics without second degree term in
the poem that he later claimed that he gave Cardano with the solution
(see \refR{Rtart}), and that when he claims to have solved $x^3+bx^2=d$ in 1530,
he says that he had not been able to solve $x^3+bx^2+cx=d$ 
\cite[XIIII  p.~12]{Tartaglia} (and there is no indication that he found a
solution later).
\end{remark}

\begin{remark}
  \label{Rci}
Cardano quickly realized the problem with the \emph{casus irreducibilis},
see \refApp{Areal},
and wrote to Tartaglia about it on 4 August 1539
\cite[XXXVIII  p.~48]{Tartaglia},
giving the correct
condition for it (see \refR{RRg}) and giving $x^3=9x+10$ as an example
(see \refE{Eci4}).
Tartaglia was no longer cooperative, but
it seems that neither Cardano nor Tartaglia understood how to handle this
case.

Cardano ignores the complications of the \ci{} in \ArsMagna{}. 
In \cite[Chapter
  XIII]{ArsMagna} he solves $y^3=8y+3$, and claims that he obtains $y=3$
(which clearly is a solution) by his method, which seems to be at best an
oversimplification. (Cf.{} \refE{Eci5}.)
\end{remark}

\begin{remark}\label{R4}
Cardano  lists \cite[Chapter XXXIX]{ArsMagna}
20 types of quartic equations that he can solve;
these are the 10 nontrivial cases without cubic term (excluding the ones with
only even powers of $x$, which are quadratic equations in $x^2$) and,
symmetrically,  
the 10 nontrivial cases without linear term
(which are reduced to the former by inversion).

Cardano  states that these cases ``are the most general as there are 67
others''; I do not 
understand which these 67 other cases are. 
Moreover, 
there are 15 cases with all possible terms
(cubic, quadratic, linear and constant), and 7 additional without quadratic
terms; these are not mentioned as far as I can see.

Cardano gives several examples where quartic equations are solved by
Ferrari's method (see \refApp{A4}); these examples illustrate
4 of the 10 types without cubic term and 2 of the 10 types without linear
term, and it is clear that the method applies
to all 20 types.

There is also a single example of an equation with both linear and cubic
terms ($x^4+2x^3=x+1$, \cite[Problem XXXIX.XIII]{ArsMagna}), 
but this is solved by special argument reducing this
equation to a succession of two quadratic equations
(the equation implies $(x(x+1))^2=x(x+1)+1$ so $x(x+1)$ is the golden ratio
$(\sqrt5+1)/2$).

Note that Cardano \cite{ArsMagna}
does \emph{not} use the general reduction \eqref{fg4} to
eliminate the cubic term (in analogy with his treatment of cubic equations),
which, together with Ferrari's method, would have given the solution of all
types of quartic equations. I do not know whether this reduction, and thus
the solution to general quartics,  was
found by Cardano, Ferrari or someone else.
\end{remark}

\section{del Ferro's solution of the cubic equation}\label{A3}

Of course, del Ferro, Tartaglia and Cardano did not know Galois theory when
they found the solution in \refT{T3}.
Their method is more direct, and consists in observing (by a stroke of genius)
that if $y=u+v$, then
\begin{equation}
  y^3=(u+v)^3=u^3+v^3+3uv(u+v)=u^3+v^3+3uvy;
\end{equation}
hence, if we can find two numbers $u$ and $v$ such that
\begin{align}
  u^3+v^3&=-q \label{df+}\\
3uv&=-p,\label{df}
\end{align}
then $y^3=-py-q$, so $y$ is a root of $g(y)=0$.
Note that \eqref{df+}--\eqref{df} are the same as \eqref{u3+v3} and
\eqref{uv}.
To find $u$ and $v$, we multiply \eqref{df+} by $u^3$ and substitute
\eqref{df}, yielding
\begin{equation}\label{u6}
  u^6+qu^3+(-p/3)^3=0.
\end{equation}
This is a quadratic equation in $u^3$, which is readily solved and yields
\eqref{u3}; then $u$ is found by taking the cube root and $v$ is found from
\eqref{df}. We see that this yields the same $u$ and
$v$ as the argument in \refS{S3}. (In particular, \eqref{v3} holds, which
shows that choosing the other root in \eqref{u6} just means interchanging
$u$ and $v$, which does not change the root $u+v$; this should be no surprise,
since $u$ and $v$ have identical roles in the \emph{ansatz} $y=u+v$.) 
Consequently, 
this straightforward method yields the same solution $u+v$ as given
in \eqref{gb1} and \eqref{t3a}, and we obtain another proof of \refT{T3}.
(To see that the three different choices of $u$ as a cube root of $u^3$
really yield the three different roots of $g(y)=0$, with correct
multiplicities if there is a double root, is perhaps less obvious by this
method. We do not give a direct proof since we already know from \refS{S3}
that this indeed is the case.)

\begin{remark}\label{Rtart}
  Actually, the method just described, with $y=u+v$, is Tartaglia's and 
Cardano's (and possibly del Ferros's) version
  for the equation $y^3=cy+d$ (with $c,d>0$) \cite[Chapter XII]{ArsMagna}
which corresponds to our $p<0$, $q<0$. 
For the equation $y^3+cy=d$,
which corresponds to our $p>0$, $q<0$, del Ferro, Tartaglia and Cardano
instead set $y=u-v$
\cite[Chapter XI]{ArsMagna}, using
\begin{equation}
  y^3=(u-v)^3=u^3-v^3-3uv(u-v)=u^3-v^3-3uvy,
\end{equation}
and then find $u$ and $v$ such that $u^3-v^3=-q$ and $3uv=p$. This just
means changing the sign of $v$ in the equations above, which of course
yields the same final result. (But it keeps $u$ and $v$ positive in both
cases.) 

The third case without second degree term, $y^3+d=cy$ is reduced by Cardano
to the  case $y^3=cy+d$
\cite[Chapter XIII]{ArsMagna}, 
essentially by substituting $-y$ for $y$, although
  Cardano expresses this differently.
\end{remark}

According to Tartaglia \cite[XXXIIII  pp.~42--43]{Tartaglia}, 
he gave these rules 25 March 1539
to Cardano in form of the
following poem 
(English translation from \cite[Tartaglia versus Cardan]{MT}):

\begin{verse}
When the cube and things together\\
Are equal to some discreet number,\\
Find two other numbers differing in this one.\\
Then you will keep this as a habit\\
That their product should always be equal\\
Exactly to the cube of a third of the things.\\
The remainder then as a general rule\\
Of their cube roots subtracted\\
Will be equal to your principal thing\\
In the second of these acts,\\
When the cube remains alone,\\
You will observe these other agreements:\\
You will at once divide the number into two parts\\
So that the one times the other produces clearly\\
The cube of the third of the things exactly.\\
Then of these two parts, as a habitual rule,\\
You will take the cube roots added together,\\
And this sum will be your thought.\\
The third of these calculations of ours\\
Is solved with the second if you take good care,\\
As in their nature they are almost matched.\\
These things I found, and not with sluggish steps,\\
In the year one thousand five hundred, four and thirty.%
\footnote{Venice reckoned the year from 1 March, so February 1735 was still 
1734 in Venice
\cite{MathDL:Tartaglia}.}
\\
With foundations strong and sturdy\\
In the city girdled by the sea. 
\end{verse}

The Italian original (which rhymes in the form \emph{terza rima}) is
\cite{MathDL:Tartaglia}: 

\begin{verse}
\newcommand\xx{\vskip 6pt}
Quando chel cubo con le cose appresso\\
Se agguaglia à qualche numero discreto\\
Trouan dui altri differenti in esso.\\
\xx
Dapoi terrai questo per consueto\\
Che'llor produtto sempre sia eguale\\
Alterzo cubo delle cose neto,\\
\xx
El residuo poi suo generale\\
Delli lor lati cubi ben sottratti\\
Varra la tua cosa principale.\\
\xx
In el secondo de cotestiatti\\
Quando che'l cubo restasse lui solo\\
Tu osseruarai quest'altri contratti,\\
\xx
Del numer farai due tal part'\`a uolo\\
Che l'una in l'altra si produca schietto\\
El terzo cubo delle cose in stolo\\
\xx
Delle qual poi, per communprecetto\\
Torrai li lati cubi insieme gionti\\
Et cotal somma sara il tuo concetto.\\
\xx
El terzo poi de questi nostri conti\\
Se solue col secondo se ben guardi\\
Che per natura son quasi congionti.\\
\xx
Questi trouai, \& non con pa\ss i tardi\\
Nel mille cinquecent\`e, quatroe trenta\\
Con fondamenti ben sald'\`e gagliardi\\
\xx
Nella citta dal mar'intorno centa.
\end{verse}

\begin{remark}\label{RViete}
An equivalent, and somewhat quicker, way to obtain Cardano's formula is to use
\emph{Vi\`ete's substitution} $y=u-p/(3u)$
in $y^3+py+q=0$,
which yields \eqref{u6} directly. (This is obviously equivalent to setting
$y=u+v$ with $3uv=-p$ as above. See \cite[Chapter 3]{vdW} for Vi\`ete's
version of this.) 
\end{remark}

\section{Ferrari's solution of the quartic equation}\label{A4}

Consider again a fourth degree polynomial $g(y)=y^4+py^2+qy+r$ as in
\eqref{g4}.
The solution to  the equation $g(y)=0$ given in \refT{T4} is not the
solution originally found by Ferrari and presented by Cardano in 
\emph{Ars Magna} \cite[Chapter XXXIX]{ArsMagna} (\cf{} \refApp{Ahistory}).

Ferrari's method is as follows (in a modern version). 
From $y^4+py^2+qy+r=0$ we obtain, for any $z$,
\begin{equation}\label{F4z}
  (y^2+z)^2=y^4+2y^2z+z^2
=(2z-p)y^2-qy+z^2-r.
\end{equation}
We let $z\=(p+u)/2$ and obtain, for any $u$,
\begin{equation}\label{F4sala}
    \Bigpar{y^2+\frac{p+u}2}^2
=uy^2-qy+\frac{(p+u)^2}4-r.
\end{equation}
The \rhs{} is a quadratic polynomial in $y$, and its discriminant is
\begin{equation}
  q^2-4u\lrpar{\frac{(p+u)^2}4-r}
=-u^3-2pu^2-p^2u+4ru+q^2=-R(u),
\end{equation}
where $R$ is the cubic resolvent \eqref{R}.
Hence, if we choose $u$ as a non-zero root of $R$, 
then the \rhs{} of \eqref{F4sala} is the square of a linear polynomial.
More precisely, if we further let $\gam=\sqrt u$,
then the \rhs{} of \eqref{F4sala} is
\begin{equation}
  uy^2-qy+\frac{q^2}{4u}
=u\Bigpar{y-\frac{q}{2u}}^2
=\Bigpar{\gam y-\frac{q}{2\gam}}^2,
\end{equation}
and thus \eqref{F4sala} yields
\begin{equation}
  \Bigpar{y^2+\frac{p+u}2}^2
=\Bigpar{\gam y-\frac{q}{2\gam}}^2.
\end{equation}
Consequently,
\begin{equation}\label{F4pm}
  y^2+\frac{p+u}2
=\pm\Bigpar{\gam y-\frac{q}{2\gam}}.
\end{equation}
This yields a pair of quadratic equations in $y$,  whose solutions are
the four roots of $g(y)=0$. (It thus suffices to choose one non-zero root of
$R(u)=0$ in order to find all roots of $g(y)=0$. See \refR{R4q} below for a
justification.)

\begin{remark}
  \label{RFerrari}
Ferrari and Cardano considered, as said above, only equations with positive
coefficients (putting some of them on the \rhs), so they used different
versions of the method for different signs of our $p$, $q$ and $r$, but the
versions are essentially the same.

Moreover, in the original version, first $y^4+py^2$ is completed to a
square (usually, at least), yielding 
\begin{equation}\label{F4p0}
    \Bigpar{y^2+\frac{p}2}^2
=-qy+\frac{p^2}4-r;
\end{equation}
then this is further modified by considering 
$(y^2+p/2+t)^2$ and choosing $t$ so that the \rhs{} becomes a square.
This is obviously equivalent to the one-step completion of a square above,
with $z=p/2+t$. We further made the substitution $t=u/2$ in order to obtain
the same form of the cubic resolvent as before.

See \cite{Helfgott} for a detailed study of Cardano's  solutions
to quartics.
\end{remark}

We can connect Ferrari's method and the methods in \refS{S4} as follows,
using the notation in \refS{S4}. 
The roots $\gb_1,\gb_2$ are $\half\xgam2\pm\frac12(\xgam3+\xgam4)$, and are
thus the roots of the quadratic equation, using \eqref{v4}--\eqref{u+v+w} and
\eqref{gam234} and assuming $\xgam2\neq0$,
\begin{equation}
  \begin{split}
(2y-\xgam2)^2&=(\xgam3+\xgam4)^2
=v+w+2\xgam3\xgam4
=-2p-u-2q/\xgam2.
  \end{split}
\end{equation}
This equation can be rewritten, since $\xgam2=\sqrt u$,
\begin{gather}
  4y^2-4y\xgam2+2u+2p+2q/\xgam2=0, \\
4 y^2+2u+2p=4\xgam2y-2q/\xgam2, \\
y^2+\frac{u+p}2=\xgam2y - \frac{q}{2\xgam2}. \label{dper}
\end{gather}
The other two roots $\gb_3,\gb_4$ are obtained by replacing $\xgam2$ by
$-\xgam2$, the other square root of $u$.

We have thus obtained the equations \eqref{F4pm}, with $\gam=\xgam2\=\sqrt u$.

\begin{remark}  \label{R4q}
This derivation of \eqref{F4pm} from \refT{T4} shows clearly that the two
roots of each of the two quadratic equations in \eqref{F4pm} together yield
the four 
different roots of $g(y)=0$. Typically, the four roots are distinct and we
obtain all roots once each, 
but even when $g$ has multiple roots and there are repetitions in the roots
of \eqref{F4pm}, we obtain the roots of $g$ with correct
multiplicities from \eqref{F4pm}.
\end{remark}

\begin{remark}
  We started above with a reduced quartic $y^4+py^2+qy+r$ (as did Cardano
  and Ferrari), but, as noted by \citet[no.~27]{Lagrange}
 the method can also be applied directly to a general
  quartic $f(x)=x^4+bx^3+cx^2+dx+e$ by expanding $(x^2+\frac b2x+\sx)^2$ and
  using $f(x)=0$ in analogy with \eqref{F4z}; we then continue as above,
  obtaining a cubic resolvent equation for $\sx$, etc., see
  \cite[no.~27]{Lagrange} or
\cite[Section 12.1.C]{Cox} for details.
The resolvent equation for $\sx$ becomes
\begin{equation}\label{R3gen}
  \sx^3-\frac c2 \sx^2+\frac{bd-4e}4\sx+\frac{(4c-b^2)e-d^2}{8}=0.
\end{equation}

Comparing with \eqref{F4z}, and recalling $y=x+b/2$, 
we have $\sx=z+b^2/16=u/2+p/2+b^2/16$
and thus $u=2\sx-p-b^2/8$,
so the resulting cubic resovent equation \eqref{R3gen} is $R(2\sx-p-b^2/8)=0$,
with 
$R$ given by \eqref{R}.
Using \refR{Rbb1}, this can be written as $\tR(2\sx)=0$, as also follows from
\eqref{R3gen} and \eqref{tR}, so the roots of this resolvent equation
are simply $s_i/2$,
\ie, $\frac12(\ga_i\ga_j+\ga_k\ga_l)$ for permutations $ijkl$ of $1234$.
\end{remark}

\begin{remark}
  \label{Rbb2}
Expressed in the roots $\gb_i$, we have 
by \refR{Rbb}
\begin{equation}
  z=(u+p)/2=(\gb_1\gb_2+\gb_3\gb_4)/2.
\end{equation}
This also follows by \eqref{dper}, which implies
$\gb_1\gb_2=(u+p)/2+q/2\xgam2$ and, replacing $\xgam2$ by $-\xgam2$,
$\gb_3\gb_4=(u+p)/2-q/2\gam_1$. 
\end{remark}

\begin{remark}
  \label{RFpencil}
Ferrari's method has the following geometric interpretation in algebraic
geometry, see \cite{Faucette} and \cite{Auckly} for details.

Let $w\=y^2$. Then $(y,w)$ is a simultaneous solution of $w^2+pw+qy+r=0$ and
$w-y^2=0$, and thus also of the linear combination 
\begin{equation*}
  w^2+pw+qy+r+u(w-y^2)=0
\end{equation*}
for any $u$. As $u$ varies, this equation defines a family (called
\emph{pencil}) of quadratic curves (also known as \emph{conics}) in the
$(y,w)$-plane. A calculation essentially equivalent to the argument above
shows that this conic is singular, and thus a union of two lines, exactly
when $R(u)=0$, and then the two lines are $w+(p+u)/2=\pm(\gam y-q/2\gam)$
with $\gam=\sqrt u$ (assuming $u\neq0$), corresponding to \eqref{F4pm}.
Ferrari's method thus can be seen as finding one singular conic in the
pencil and decomposing it into a pair of lines; the solutions then are given
by the intersections between these lines and the conic $w=y^2$.
\end{remark}

\begin{remark}
  \label{Rdescartes}
Descartes gave in 1637 yet another method to solve quartic equations
(see \eg{} \cite{Helfgott}).
Descartes' s method is
based
on trying to factor $g(y)=(y^2+ky+l)(y^ 2+my+n)$ by identifying the
coefficients, which yields the equations 
\begin{align}
  k+m&=0,&
km+l+n&=p, &
kn+lm&=q, &
ln&=r.
\end{align}
This yields $m=-k$ and, after some algebra, $R(k^2)=0$, where $R$
is the cubic resolvent \eqref{R}. Hence we can solve $R(u)=0$, choose one
root $u$, let $k\=\gam\=\sqrt{u}$ and $m\=-k$ (the other square root of $u$);
solving for $l$ and $n$ then yields (for $u\neq0$)
\begin{equation}\label{descartes}
g(y)=\Bigpar{ y^2+\gam y+\frac{p+u}2-\frac{q}{2\gam}}
\Bigpar{ y^2-\gam y+\frac{p+u}2+\frac{q}{2\gam}}.
\end{equation}
Consequently, we see again that $g(y)=0$ is equivalent to \eqref{F4pm}.
\end{remark}

\newcommand\AMS{Amer. Math. Soc.}
\newcommand\Springer{Springer-Verlag}
\newcommand\Wiley{Wiley}

\newcommand\vol{\textbf}
\newcommand\jour{\emph}
\newcommand\book{\emph}
\newcommand\inbook{\emph}
\def\no#1#2,{\unskip#2, no. #1,} 
\newcommand\toappear{\unskip, to appear}

\newcommand\webcite[1]{\hfil  
   \penalty0 
\texttt{\def~{{\tiny$\sim$}}#1}\hfill\hfill}
\newcommand\webcitesvante{\webcite{http://www.math.uu.se/~svante/papers/}}
\newcommand\arxiv[1]{\webcite{arXiv:#1.}}
\newcommand\DOI[1]{DOI: \webcite{#1}}

\def\nobibitem#1\par{}

\end{document}